\newtheorem*{T1}{Theorem~\ref{seq top complexity of conf(n,w)}}
\newtheorem*{T2}{Theorem~\ref{seq dist top complexity of conf(n,w)}}
\newtheorem*{T3}{Theorem~\ref{seq dist top complexity of FnR2}}
\newtheorem{thm}{Theorem}[section]
\newtheorem{lem}[thm]{Lemma}
\newtheorem{prop}[thm]{Proposition}
\newtheorem{cor}[thm]{Corollary}
\newtheorem{exam}[thm]{Example}
\theoremstyle{definition}
\theoremstyle{definition}
\newtheorem*{claim*}{Claim}
\newtheorem*{quest*}{Question}
\newtheorem*{remark*}{Remark}
\newtheorem*{fact*}{Fact}
\newcommand{\thmtext}{\[
\textbf{TC}_{\textbf{r}}\big(\text{conf}(n,w)\big)=\begin{cases}0&\mbox{if }n=1,\\
r(n-1)-1&\mbox{if }1<n\le w,\\
r\big(n-\big\lceil\frac{n}{w}\big\rceil\big)&\mbox{if }n>w.
\end{cases}
\]}
\newcommand{\thmtextone}{\[
\textbf{dTC}_{\textbf{r}}\big(\text{conf}(n,w)\big)=\begin{cases}0&\mbox{if }n=1,\\
r(n-1)-1&\mbox{if }1<n\le w,\\
r\big(n-\big\lceil\frac{n}{w}\big\rceil\big)&\mbox{if }n>w.
\end{cases}
\]}
\newcommand{\thmtexttwo}{\[
\textbf{dTC}_{\textbf{r}}\big(F_{n}(\R^{2})\big)=r(n-1)-1.
\]}
\newcommand{\Z}{\ensuremath{\mathbb{Z}}}
\newcommand{\Q}{\ensuremath{\mathbb{Q}}}
\newcommand{\R}{\ensuremath{\mathbb{R}}}
\newcommand{\F}{\ensuremath{\mathbb{F}}}
\title{The sequential (distributional) topological complexity of the ordered configuration space of disks in a strip}
\author{Nicholas Wawrykow}
\date{}
\begin{document}
\maketitle

\begin{abstract}
How hard is it to program $n$ robots to move about a long narrow aisle while making a series of $r-2$ intermediate stops, provided only $w$ of the robots can fit across the width of the aisle?
In this paper, we answer this question by calculating the $r^{\text{th}}$-sequential topological complexity of $\text{conf}(n,w)$, the ordered configuration space of $n$ open unit-diameter disks in the infinite strip of width $w$, as well as its $r^{\text{th}}$-sequential distributional topological complexity.
We prove that as long as $n$ is greater than $w$, the $r^{\text{th}}$-sequential (distributional) topological complexity of $\text{conf}(n,w)$ is $r\big(n-\big\lceil\frac{n}{w}\big\rceil\big)$.
This shows that any non-looping program moving the $n$ robots between arbitrary initial and final configurations, with $r-2$ intermediate stops, must consider at least $r\big(n-\big\lceil\frac{n}{w}\big\rceil\big)$ cases.
\end{abstract}

\section{Introduction}
When topological complexity was first introduced in the early 2000s, many of the motivating examples came from robotics, e.g., the configuration spaces of robot arms \cite[Section 8]{farber2003topological} and of swarms of robots moving along tracks \cite{farber2005collision}.
To arrive at these tractable examples, certain idealizations were made: The robots were assumed to take up no space, allowing them to be treated as points, and they were only able to move along certain predefined paths.
At the time---and this is still true to an extent today---real-world computational and economic constraints made the latter assumption perfectly natural; it is much easier and cheaper to program robots to move along tracks than it is to give them two or more dimensions of freedom. 
The former assumption was more of a mathematical one.
Point configuration spaces had been a popular object of study for several decades at that point, and disk configurations had yet to be introduced; moreover, if the underlying space is large enough, then the corresponding point and disk configuration spaces are homotopy equivalent.
Still, there are many situations where these idealizations do not hold.
Fortunately, recent computational advances and the development of disk configuration spaces allow us to do away with these assumptions in certain situations.

To return to topological complexity's roots, we study a space that naturally arises if one considers $n$ circular robots moving about a long aisle such that only $w$ of the robots can fit abreast, i.e., we consider $\text{conf}(n,w)$ the \emph{ordered configuration of $n$ open unit-diameter disks in the infinite strip of width $w$}.
This disk configuration space, which can be thought of a subspace of $\R^{2n}$,
\[
\text{conf}(n, w):=\big\{(x_{1}, y_{1}, \dots, x_{n}, y_{n})\in \R^{2n}|(x_{i}-x_{j})^{2}+(y_{i}-y_{j})^{2}\ge 1\text{ for }i\neq j\text{ and }\frac{1}{2}\le y_{i}\le w-\frac{1}{2} \text{ for all } i\big\},
\]
is a well-studied variation of the ordered configuration space of $n$ points in the plane, e.g., \cite{alpert2021configuration, alpert2021configuration1, BBK, wawrykow2022On, wawrykow2023representation}.
We compute two variations of topological complexity for this space.
The first of these variations, the \emph{$r^{\text{th}}$-sequential topological complexity}, which we denote by $\textbf{TC}_{\textbf{r}}$, extends the motion planning problem of continuously finding a path between initial and final configurations to one of continuously finding a such a path that makes $r-2$ additional stops.
We show that

\begin{thm}\label{seq top complexity of conf(n,w)}
\thmtext
\end{thm}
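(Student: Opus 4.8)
The plan is to compute $\textbf{TC}_{\textbf{r}}$ of $\text{conf}(n,w)$ by proving matching lower and upper bounds, treating the three cases separately. The $n=1$ case is immediate: $\text{conf}(1,w)$ deformation retracts onto a contractible space (an interval cross the real line), and any $\textbf{TC}_{\textbf{r}}$ of a contractible space is $0$. The regime $1<n\le w$ should reduce to the classical situation: when the strip is at least as wide as the number of disks, $\text{conf}(n,w)$ is homotopy equivalent to the ordered configuration space $F_n(\R^2)$ of $n$ points in the plane, so the desired value $r(n-1)-1$ should follow from the known computation of $\textbf{TC}_{\textbf{r}}\big(F_n(\R^2)\big)$. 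Thus the genuinely new content lives entirely in the case $n>w$, where the answer $r\big(n-\lceil n/w\rceil\big)$ must be established.

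For the case $n>w$, my first step is to understand the homotopy type of $\text{conf}(n,w)$ well enough to control its cohomology. I would invoke the combinatorial/discretized model for these strip configuration spaces developed in the literature on $\text{conf}(n,w)$ (the Alpert--et al.\ work cited), which presents $\text{conf}(n,w)$ up to homotopy as a CW or cube complex whose cells are indexed by ways of stacking disks into at most $w$ columns. The key homotopical invariant I need is the top nonvanishing cohomology together with enough zero-divisor structure to feed the standard lower bound for sequential topological complexity. I expect the relevant cohomological dimension to be $n-\lceil n/w\rceil$: intuitively, arranging $n$ disks in a strip of width $w$ forces at least $\lceil n/w\rceil$ of them into a ``rigid'' leftmost/bottom stack that contributes no freedom, leaving $n-\lceil n/w\rceil$ independent loop-type generators.

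The lower bound is the step I expect to be the main obstacle. The standard tool is the zero-divisor cup-length bound for $\textbf{TC}_{\textbf{r}}$: if one can find $r\big(n-\lceil n/w\rceil\big)$ cohomology classes in the kernel of the $r$-fold iterated diagonal (the $r$-fold reduced external product) whose product is nonzero in $H^*\big(\text{conf}(n,w)^r\big)$, then $\textbf{TC}_{\textbf{r}}$ is at least that product's length. Concretely I would take the $n-\lceil n/w\rceil$ one-dimensional cohomology generators of $\text{conf}(n,w)$, form from each the appropriate $r-1$ independent zero-divisors across the $r$ factors, and argue via the Künneth theorem and the known ring structure of $H^*\big(\text{conf}(n,w)\big)$ that the resulting length-$r\big(n-\lceil n/w\rceil\big)$ product survives. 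Verifying non-triviality of this large product is the delicate combinatorial heart of the argument and is where the precise cohomology ring computation must be applied carefully.

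For the matching upper bound, I would exploit the fact that $\text{conf}(n,w)$ is homotopy equivalent to a complex of dimension $n-\lceil n/w\rceil$, and, better, that it admits the structure of an aspherical space (a $K(\pi,1)$) or of a space stably built from as few cells as its cohomological dimension permits. The general bound $\textbf{TC}_{\textbf{r}}(X)\le r\cdot\dim X$ gives exactly $r\big(n-\lceil n/w\rceil\big)$ once one knows the homotopy dimension is $n-\lceil n/w\rceil$; combined with the lower bound this closes the computation. The main risk in the upper bound is subtracting off the right constant---one must confirm the dimension bound is not reducible further and that no connectivity improvement lowers it---so I would cross-check the upper and lower bounds against the $1<n\le w$ boundary case $w=n$, where $\lceil n/w\rceil=1$ and both formulas must agree up to the $-1$ discrepancy coming from the cohomological dimension of $F_n(\R^2)$.
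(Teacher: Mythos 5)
Your overall skeleton---contractibility for $n=1$, the homotopy equivalence $\text{conf}(n,w)\simeq F_{n}(\R^{2})$ for $1<n\le w$ combined with the known value of $\textbf{TC}_{\textbf{r}}\big(F_{n}(\R^{2})\big)$, and the upper bound $r\cdot\text{hdim}$ applied to the $\big(n-\lceil n/w\rceil\big)$-dimensional cell model---matches the paper. The gap is in your lower bound for $n>w$, and it is twofold. First, a counting problem: from a single family of $n-\lceil n/w\rceil$ degree-one generators you can only manufacture $r-1$ independent zero-divisors per generator (this is exactly why $\textbf{TC}_{\textbf{r}}(T^{m})=(r-1)m$ and not $rm$), so the product you describe has length $(r-1)\big(n-\lceil n/w\rceil\big)$, falling short of the target by $n-\lceil n/w\rceil$. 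To close that deficit you need a \emph{second} family of degree-one classes homologically independent from the first. The paper produces both families as the $H^{1}$-duals of two explicit embedded tori (concatenation products of wheels) whose images in $H_{1}$ intersect trivially; the relevant product is then $\zeta^{12}_{g}\zeta^{12}_{f}\prod_{k=3}^{r}\zeta^{(k-1)k}_{f}$, of length $(r-1)m+l$ with $m=l=n-\lceil n/w\rceil$.

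Second, your plan to verify nontriviality of the product using ``the known ring structure of $H^{*}\big(\text{conf}(n,w)\big)$'' assumes something that is not available: the paper states explicitly that such an understanding of the cohomology ring is elusive, and that this is precisely why it avoids the direct cup-length computation. The workaround is to never multiply in $H^{*}\big(\text{conf}(n,w)^{r}\big)$ at all, but instead to pull the candidate product back along $(f^{r-1}\times g)^{*}$ to the cohomology of an honest torus $T^{m(r-1)+l}$, where degree counting plus homological disjointness kill all but one term and the survivor evaluates to $1$ on the fundamental class. Without either the second disjoint family or this detection-by-tori mechanism, your argument does not reach $r\big(n-\lceil n/w\rceil\big)$. (A minor further point: $\text{conf}(n,w)$ is not a $K(\pi,1)$ when $w>2$, so that part of your upper-bound discussion should be dropped; the dimension bound alone suffices.)
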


Theorem \ref{seq top complexity of conf(n,w)} has the following real world interpretation: If one wants to write a program that governs the motion of $n$ robots moving around a long aisle of width $w$, then as long as $n>w$, such a program must consist of at least $r\big(n-\lceil\frac{n}{w}\rceil\big)$ different cases.
Though this result builds on previous result of the author concerning the classical topological complexity of this space \cite[Theorem 1.1]{wawrykow2024topological}, it uses entirely different methods.
Instead of laboriously studying the cohomology ring of $\text{conf}(n,w)$, we find a pair of homologically decomposable disjoint tori in $\text{conf}(n,w)$ \`{a} la Knudsen \cite{knudsen2024farber}.
We show that in general such tori not only yield a lower bound for $\textbf{TC}_{\textbf{r}}$, but also for $\textbf{dTC}_{r}$, the \emph{$r^{\text{th}}$-sequential distributional topological complexity}.
This variation of topological complexity cares not for minimizing the number of open sets covering $X^{r}$ with sections of the path projection map $\pi_{r}:X^{I}\to X^{r}$; instead, it only cares about minimizing how many such sets cover any point in $X^{r}$.
With this in mind, we have 

\begin{thm}\label{seq dist top complexity of conf(n,w)}
\thmtextone
\end{thm}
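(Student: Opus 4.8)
The plan is to prove Theorem~\ref{seq dist top complexity of conf(n,w)} by establishing matching upper and lower bounds, exploiting the general inequality $\textbf{dTC}_{\textbf{r}}(X)\le\textbf{TC}_{\textbf{r}}(X)$ that holds for all sufficiently nice spaces. Since Theorem~\ref{seq top complexity of conf(n,w)} already computes $\textbf{TC}_{\textbf{r}}\big(\text{conf}(n,w)\big)$ exactly, the upper bound $\textbf{dTC}_{\textbf{r}}\big(\text{conf}(n,w)\big)\le\textbf{TC}_{\textbf{r}}\big(\text{conf}(n,w)\big)$ gives precisely the right-hand side for free in all three cases. Thus the entire content of the theorem is the matching lower bound for $\textbf{dTC}_{\textbf{r}}$, and I would organize the proof around producing that.

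For the lower bound, the key is the remark in the introduction that the pair of homologically decomposable disjoint tori found ``\`{a} la Knudsen'' yields a lower bound not just for $\textbf{TC}_{\textbf{r}}$ but also for $\textbf{dTC}_{\textbf{r}}$. So first I would isolate and prove, as a standalone general lemma, the statement that if a space $X$ contains two disjoint tori $T_1,T_2$ whose fundamental classes pair nontrivially against products of cohomology classes in the appropriate zero-divisor sense, then one obtains a lower bound of the same numerical form on $\textbf{dTC}_{\textbf{r}}(X)$. The natural mechanism here is a distributional analogue of the zero-divisor cup-length bound: $\textbf{dTC}_{\textbf{r}}$ is bounded below by the length of a nonzero product of $r$-fold sequential zero-divisors, which one detects homologically by pairing against the classes carried by the disjoint tori. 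I would set up the $r$-fold diagonal $\Delta_r\colon X\to X^r$, its sequential zero-divisor ideal $\ker\big(H^*(X^r)\to H^*(X)\big)$, and then evaluate an explicit product of such classes on the homology classes of $T_1\times\cdots\times$ (copies arranged across the $r$ factors) to certify it is nonzero.

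Concretely, the main computation is to identify, inside $\text{conf}(n,w)$ with $n>w$, the correct tori and the correct cohomology classes. The torus dimension should be exactly $n-\lceil n/w\rceil$, matching the per-factor contribution, so that distributing these classes across the $r$ coordinates of $X^r$ produces a nonzero zero-divisor product of length $r\big(n-\lceil n/w\rceil\big)$; for the middle regime $1<n\le w$ one instead recovers $r(n-1)-1$ from the standard configuration-space-in-the-plane classes, and $n=1$ is trivial. I would construct these tori by placing the disks into ``lanes'' across the width $w$ and letting $n-\lceil n/w\rceil$ of them orbit one another, reusing the geometry behind the $\textbf{TC}_{\textbf{r}}$ computation so that the homology generators and their cup-product structure are already understood.

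The hard part will be the distributional lower bound itself: unlike the classical zero-divisor cup-length argument, the distributional version requires showing that the cohomological product cannot be annihilated even when one allows probabilistic (measure-valued) motion planners, so I must verify that the homological pairing against the disjoint tori genuinely obstructs the existence of a global section of the relevant fibration in the distributional sense, not merely a continuous one. I expect this to rest on a Berstein--Schwarz / category-weight style argument adapted to $\textbf{dTC}_{\textbf{r}}$, where the disjointness of the tori is what forces the obstruction class to survive; making that survival precise, and checking it assembles correctly across all $r$ factors to give the stated numerical bound, is where the real work lies.
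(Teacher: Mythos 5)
Your overall strategy matches the paper's: the upper bound is free from $\textbf{dTC}_{\textbf{r}}\le\textbf{TC}_{\textbf{r}}$ together with Theorem~\ref{seq top complexity of conf(n,w)}, and the lower bound comes from pairing products of $r$-fold zero-divisors against a pair of homologically disjoint decomposable tori. Two remarks on where your plan diverges from what is actually needed. First, the ``distributional cup-length'' machinery you describe as the hard part does not need to be built from scratch: it is supplied by Jauhari's theorem (Lemma~\ref{Jauhari lower bound}) together with the Dranishnikov--Jauhari fact that $H^{*}(SP^{l}(Y);\Q)\to H^{*}(Y;\Q)$ is surjective for finite complexes, which yields Corollary~\ref{rational zero-divisor lower bound for dtcm}. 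The only thing one must check is that the zero-divisors produced by the tori are \emph{rational} classes; the real work of the paper is then the construction of the tori and the verification, via the Alpert--Manin basis, that their images in $H_{1}$ intersect trivially. Your proposal does not engage with how disjointness is certified, which is the main computation for $n>w$.

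Second, and more seriously, your treatment of the regime $1<n\le w$ has a genuine gap. You propose per-factor tori of dimension $n-\lceil n/w\rceil$, which for $n\le w$ equals $n-1$; two disjoint decomposable $(n-1)$-tori would force $\textbf{dTC}_{\textbf{r}}\ge r(n-1)$, contradicting the upper bound $r(n-1)-1$, so no such pair exists in $F_{n}(\R^{2})$. Falling back on ``the standard configuration-space-in-the-plane classes'' does not obviously repair this: the Gonz\'{a}lez--Grant lower bound for $\textbf{TC}_{\textbf{r}}(F_{n}(\R^{2}))$ must be re-derived with rational coefficients to feed into Corollary~\ref{rational zero-divisor lower bound for dtcm}, and you give no argument for that. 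The paper's mechanism is different: it uses a disjoint pair of decomposable tori of \emph{unequal} dimensions, $A=W(n,\dots,1)$ of dimension $n-1$ and $B=W(n-1,\dots,1)W(n)$ of dimension $n-2$, together with the generalization of Knudsen's bound to tori of different dimensions (Proposition~\ref{disjoint tori} and Corollary~\ref{tori yield dtc lower bound}), which yields exactly $(r-1)(n-1)+(n-2)=r(n-1)-1$. Without the unequal-dimension version of the tori bound, or an explicit rational zero-divisor computation for $F_{n}(\R^{2})$, your argument does not close the middle case.
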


While our proof of Theorem \ref{seq top complexity of conf(n,w)} proves the third case of Theorem \ref{seq dist top complexity of conf(n,w)}, it does not prove the second---note, the first is trivial.
As such, we compute the $r^{\text{th}}$-sequential distributive topological complexity of $F_{n}(\R^{2})$, the ordered configuration space of points in the plane.
To do so, we generalize Knudsen's results about homologically decomposable disjoint tori \cite[Proposition 2.2]{knudsen2024farber}, yielding

\begin{cor}\label{seq dist top complexity of FnR2}
\thmtexttwo
\end{cor}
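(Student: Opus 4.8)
The plan is to prove the equality by a matching upper and lower bound. The upper bound is immediate: every genuine sequential motion planner is in particular a distributional one, so $\textbf{dTC}_{\textbf{r}}(X)\le \textbf{TC}_{\textbf{r}}(X)$ for every $X$, and the value $\textbf{TC}_{\textbf{r}}\big(F_{n}(\R^{2})\big)=r(n-1)-1$ is already known from the computation of the sequential topological complexity of ordered configuration spaces of Euclidean space (Gonz\'alez--Grant), the final $-1$ reflecting the even dimension of the plane. It therefore remains to prove the lower bound $\textbf{dTC}_{\textbf{r}}\big(F_{n}(\R^{2})\big)\ge r(n-1)-1$.

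For the lower bound I would first extract, as a self-contained lemma, the sequential and distributional generalization of Knudsen's Proposition 2.2: a pair of disjoint, homologically decomposable $d$-tori in a space $X$ forces $\textbf{dTC}_{\textbf{r}}(X)\ge rd-1$. Two ingredients feed into this. The first is a distributional, sequential cup-length estimate, namely that $\textbf{dTC}_{\textbf{r}}(X)$---just like $\textbf{TC}_{\textbf{r}}(X)$---is bounded below by the cup length of the zero-divisor ideal $\ker\big(\Delta_{r}^{*}\colon H^{*}(X^{r};\F)\to H^{*}(X;\F)\big)$; this is the sequential form of the distributional cup-length bounds of Dranishnikov--Jauhari and holds with field coefficients. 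The second is the homological detection principle: when the tori are disjoint and homologically decomposable, the fundamental class of an associated product of subtori in $X^{r}$ pairs nontrivially, under the Kronecker pairing, with a product of $rd-1$ zero-divisors, certifying that this product is nonzero without ever identifying it inside the Orlik--Solomon algebra. Since this detection is purely (co)homological, the very tori that witness the ordinary $\textbf{TC}_{\textbf{r}}$ bound also witness the distributional one.

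It then remains to produce, in $F_{n}(\R^{2})$, disjoint homologically decomposable tori of dimension $d=n-1$. I would build them from the iterated fibration $F_{n}(\R^{2})\to F_{n-1}(\R^{2})$ whose fibers are punctured planes: at each of the $n-1$ stages one point winds in a small circle around a cluster of the already-placed points, producing $n-1$ independent orbit directions and hence an $(n-1)$-torus whose fundamental class is decomposable. Choosing two such orbit configurations that occupy disjoint regions of the plane yields a pair of tori that are disjoint as subspaces of $F_{n}(\R^{2})$, and applying the lemma with $d=n-1$ gives $\textbf{dTC}_{\textbf{r}}\big(F_{n}(\R^{2})\big)\ge r(n-1)-1$, matching the upper bound.

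The main obstacle is twofold. First, I must verify that the generalized Proposition 2.2 really does descend to the distributional invariant in the sequential setting---that is, that the distributional cup-length estimate is available for $\textbf{dTC}_{\textbf{r}}$ with field coefficients and interacts correctly with the diagonal $\Delta_{r}$. Second, and more delicate, is the bookkeeping that produces the sharp exponent $rd-1$ rather than $rd$ or $r(d-1)$: one must track how the $r$ evaluation coordinates of $X^{r}$ combine with the $n-1$ orbit directions of the disjoint tori, and pinpoint exactly which single product is forced to vanish by the even dimension of $\R^{2}$, precisely as in the Farber--Yuzvinsky and Gonz\'alez--Grant computations.
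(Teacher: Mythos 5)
Your upper bound is fine and matches the paper: $\textbf{dTC}_{\textbf{r}}\le\textbf{TC}_{\textbf{r}}$ plus the Gonz\'alez--Grant computation. The lower bound, however, has a genuine gap, and it sits exactly at the point you flag as ``delicate bookkeeping'' and then leave unresolved. The torus lemma you invoke does not say what you need it to say: for two homologically disjoint, homologically decomposable $d$-tori, the zero-divisor product one builds has length $rd$ (this is Knudsen's Proposition 2.2), not $rd-1$. There is no version of the argument in which a pair of equal-dimensional disjoint tori yields the exponent $rd-1$; the ``$-1$'' cannot be extracted afterwards from the evenness of the plane, because the detection argument is purely a pairing against a fundamental class of $T^{rd}$ and either certifies a length-$rd$ product or nothing. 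Worse, the pair of tori you propose to construct cannot exist: if $F_{n}(\R^{2})$ admitted two homologically disjoint decomposable $(n-1)$-tori, the lemma would give $\textbf{TC}_{\textbf{r}}\big(F_{n}(\R^{2})\big)\ge r(n-1)$, contradicting the known value $r(n-1)-1$. Your construction conflates geometric disjointness (two orbit patterns occupying disjoint regions of the plane) with homological disjointness (the images of the two maps on $H_{1}$ intersecting trivially inside $H_{1}\big(F_{n}(\R^{2})\big)\cong\Z^{\binom{n}{2}}$); the former is easy to arrange and irrelevant, while the latter is obstructed.

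The fix, which is the route the paper takes, is to use homologically disjoint decomposable tori of \emph{different} dimensions and the corresponding asymmetric bound: an $m$-torus and an $l$-torus that are homologically disjoint force $\textbf{dTC}_{\textbf{r}}(X)\ge(r-1)m+l$, via a product of that many rational $r$-fold zero-divisors fed into the Dranishnikov--Jauhari cup-length estimate (so your first ``obstacle,'' the availability of the distributional cup-length bound, is genuinely fine with $\Q$ coefficients). Concretely one takes $A=W(n,\dots,1)$, an $(n-1)$-torus in which each point orbits the cluster of all lower-indexed points, and $B=W(n-1,\dots,1)$, the analogous $(n-2)$-torus on the first $n-1$ points with the $n$-th point parked off to the side. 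Every basis summand of the image of $A$ in $H_{1}$ contains a term $W(n,i)$, while no summand of the image of $B$ does, so they are homologically disjoint, and $(r-1)(n-1)+(n-2)=r(n-1)-1$ gives the sharp lower bound. Without replacing your two $(n-1)$-tori by such an $(n-1)$-torus/$(n-2)$-torus pair, and without the mixed-dimension form of the lemma, your argument cannot close.
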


\subsection{Acknowledgements}
The author would like to thank Jes\'{u}s Gonz\'{a}lez for the suggestion to consider sequential topological complexity of $\text{conf}(n,w)$.
The author would also like to thank Alexander Dranishnikov and an anonymous referee for comments on an earlier draft of this paper.

\section{Topological Complexity}
Recall that topological complexity is a measure of the difficulty of the motion planning problem for a space.
Namely, if $X^{I}$ denotes the path space of $X$ with the compact-open topology, and
\begin{align*}
\pi:X^{I}&\to X\times X\\
\pi(\gamma)&=\big(\gamma(0), \gamma(1)\big)
\end{align*}
sends a path $\gamma$ to its endpoints, then the \emph{topological complexity of X} is a numerical homotopy invariant that counts the minimal number of open sets $U_{i}$ needed to cover $X\times X$ such that there exists a continuous section $s_{i}:U_{i}\to X^{I}$ of $\pi$ on each $U_{i}$.
We write $\textbf{TC}(X)$ for the topological complexity of $X$, and we set $\textbf{TC}(X):=\infty$ if there is no such finite cover $\{U_{i}\}$ of $X\times X$.
Note, we choose to work with a reduced version of topological complexity, which is Farber's original version shifted down by $1$.

\begin{exam}\label{tc points in Rn}
If we shrink our robots from disks to points, we move from $\text{conf}(n,w)$ to $F_{n}(\R^{2})$, the ordered configuration space of $n$ points in the plane.
When $n\ge 2$, Farber and Yuzvinsky proved that $\textbf{TC}\big(F_{n}(\R^{2})\big)=2n-3$ \cite[Theorem 1]{farber2002topological}, and Farber and Grant extended this to ordered configurations in arbitrary $\R^{m}$, proving that $\textbf{TC}\big(F_{n}(\R^{m})\big)=2n-3$ if $m$ is even and $\textbf{TC}\big(F_{n}(\R^{m})\big)=2n-2$ if $m$ is odd \cite[Theorem 1]{farber2009topological}.
\end{exam}

\begin{exam}
If $n\le w$, we have that $\text{conf}(n,w)$ and $F_{n}(\R^{2})$ are homotopic, so $\textbf{TC}\big(\text{conf}(n,w)\big)=2n-3$.
For $n>w$, these spaces are homotopically distinct, and $\textbf{TC}\big(\text{conf}(n,w)\big)=2\big(n-\lceil\frac{n}{w}\rceil\big)$ \cite[Theorem 1.1]{wawrykow2024topological}.
\end{exam}

There are several variations of topological complexity defined in the literature; we focus on two that correspond to paths that make several stops.

\subsection{Sequential Topological Complexity}

One can generalize the motion planning problem by asking for a continuous motion planner that takes in not only pair of points in $X$, but an $r$-tuple of points $x_{1},\dots, x_{r}$, and asks for a path from $x_{1}$ to $x_{2}$ to $x_{3}$, etc., that varies continuously in the choices of the $r$ points.
This can be formalized by writing
\begin{align*}
\pi_{r}:X^{I}&\to X^{r}\\
\pi_{r}(\gamma)&=\big(\gamma(0), \gamma\Big(\frac{1}{r-1}\Big),\dots, \gamma\Big(\frac{r-2}{r-1}\Big), \gamma(1)\big)
\end{align*}
for the fibration that associates a path $\gamma\in X^{I}$ to its values on the points $0, \frac{1}{r-1},\dots, \frac{r-2}{r-1}$, and $1$.
In this case, the motion planning problem becomes one of finding the minimal $k$ such that there are $k+1$ open sets $U_{0}, \dots, U_{k}$ covering $X^{r}$ such that that $\pi_{r}$ has a continuous section on each $U_{i}$.
This minimal $k$ is the \emph{$r^{\text{th}}$-sequential topological complexity of $X$}, and is denoted $\textbf{TC}_{\textbf{r}}$.
Note that if $r=2$, then $\textbf{TC}_{\textbf{2}}(X)=\textbf{TC}(X)$ as in this case the path makes no intermediate stops.
For the original formulation of sequential topological complexity see \cite{rudyak2010higher}.

\begin{exam}
Gonz\'{a}lez and Grant calculated $\textbf{TC}_{\textbf{r}}\big(F_{n}(\R^{m})\big)$ for all $n$ at least $2$ and all $m$, proving that $\textbf{TC}_{\textbf{r}}\big(F_{n}(\R^{m})\big)=r(n-1)-1$ if $m$ is even and $\textbf{TC}_{\textbf{r}}\big(F_{n}(\R^{m})\big)=r(n-1)$ if $m$ is odd \cite[Theorem 1.3]{gonzalez2015sequential}. 
Note that this agrees with, and builds on, the topological complexity results of Farber and Yuzvinsky and Farber and Grant described in Example \ref{tc points in Rn}.
\end{exam}

\begin{exam}
In the case of a graph $\Gamma$, Knudsen proved that $\textbf{TC}_{\textbf{r}}\big(F_{n}(\Gamma)\big)$ depends only on $r$ and the number of vertices of valence at least $3$ in $\Gamma$, once $n$ is large enough, proving a conjecture of Farber \cite[Theorem 1.1]{knudsen2022topological}.
\end{exam}

Directly computing the sequential topological complexity of a space is hard; fortunately, there exist calculable upper and lower bounds.
In our case these bounds will coincide, yielding $\textbf{TC}_{\textbf{r}}\big(\text{conf}(n,w)\big)$.

\begin{prop}\label{upper bound for seq top}
(Basabe--Gonz\'{a}lez--Rudyak--Tamaki \cite[Theorem 3.9]{basabe2014higher})
For any path-connected space $X$
\[
\textbf{TC}_{\textbf{r}}(X)\le r\frac{\text{hdim}(X)}{\text{conn}(X)+1},
\]
where $\text{hdim}(X)$ is the homotopy dimension of $X$ and $\text{conn(X)}$ is the connectivity of $X$.
\end{prop}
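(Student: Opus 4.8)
The plan is to identify $\textbf{TC}_{\textbf{r}}(X)$ with the reduced sectional category (Schwarz genus) of the evaluation fibration $\pi_{r}\colon X^{I}\to X^{r}$, and then to control this genus by the Lusternik--Schnirelmann category of the base $X^{r}$ together with the standard dimension--connectivity estimate. Concretely, by definition $\textbf{TC}_{\textbf{r}}(X)$ is exactly $\mathrm{secat}(\pi_{r})$ in the reduced convention: it is one less than the minimal number of open sets covering $X^{r}$ over each of which $\pi_{r}$ admits a continuous section.

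The first step is the general inequality $\mathrm{secat}(p)\le\mathrm{cat}(B)$ for any fibration $p\colon E\to B$ with nonempty fibers. I would prove this by the usual lifting argument: if $U\subseteq B$ is open and the inclusion $U\hookrightarrow B$ is null-homotopic via a homotopy $H\colon U\times I\to B$ ending at a constant map $c_{b_{0}}$, then choosing a point $e_{0}$ in the fiber over $b_{0}$ and lifting the constant map $u\mapsto e_{0}$ backwards along $H$ using the homotopy lifting property produces a section of $p$ over $U$. Since $X$ is path-connected, $X^{r}$ is path-connected and the fibers of $\pi_{r}$ are nonempty, so this applies and yields
\[
\textbf{TC}_{\textbf{r}}(X)=\mathrm{secat}(\pi_{r})\le\mathrm{cat}(X^{r}).
\]

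The second step is to estimate $\mathrm{cat}(X^{r})$. Here I would invoke the reduced dimension--connectivity bound $\mathrm{cat}(Y)\le \text{hdim}(Y)/(\text{conn}(Y)+1)$, valid for path-connected $Y$; because $\mathrm{cat}$ is a homotopy invariant one is free to replace $Y$ by a CW model realizing its homotopy dimension. Applying this to $Y=X^{r}$ and using the product estimates $\text{hdim}(X^{r})\le r\,\text{hdim}(X)$ and $\text{conn}(X^{r})=\text{conn}(X)$ gives
\[
\mathrm{cat}(X^{r})\le\frac{\text{hdim}(X^{r})}{\text{conn}(X^{r})+1}\le\frac{r\,\text{hdim}(X)}{\text{conn}(X)+1},
\]
which combines with the first step to yield the claim. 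Alternatively one could bound $\mathrm{cat}(X^{r})\le r\,\mathrm{cat}(X)$ by the product inequality for category and then apply the dimension--connectivity bound to $X$ itself.

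The main obstacle is really the choice of the right intermediate bound. The most naive approach---estimating $\mathrm{secat}(\pi_{r})$ directly from the dimension of $X^{r}$ and the connectivity of the fiber $(\Omega X)^{r-1}$ via Schwarz's genus--connectivity theorem---breaks down precisely because $\Omega X$ is typically disconnected when $X$ is not simply connected, so the fiber connectivity is $-1$ and that estimate degenerates. Routing through $\mathrm{cat}(X^{r})$ sidesteps this issue entirely, at the modest cost of verifying the two standard facts above (the genus--category inequality and the James-type category bound) together with the elementary product behavior of homotopy dimension and connectivity.
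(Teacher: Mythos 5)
The paper does not prove this statement at all---it is quoted verbatim from Basabe--Gonz\'{a}lez--Rudyak--Tamaki and used as a black box---so there is no internal proof to compare against. Your argument is the standard one and is correct: the chain $\textbf{TC}_{\textbf{r}}(X)=\mathrm{secat}(\pi_{r})\le\mathrm{cat}(X^{r})\le \mathrm{hdim}(X^{r})/(\mathrm{conn}(X^{r})+1)\le r\,\mathrm{hdim}(X)/(\mathrm{conn}(X)+1)$ assembles three standard facts (the lifting argument for $\mathrm{secat}\le\mathrm{cat}$ of the base, the dimension--connectivity bound for category of a CW model, and the behavior of $\mathrm{hdim}$ and $\mathrm{conn}$ under finite products), each of which you state accurately and in the correct reduced normalization; this is essentially the route taken in the cited source.

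One remark in your closing paragraph is off, though it does not affect your proof. The direct Schwarz estimate on $\pi_{r}$ does not degenerate when $X$ fails to be simply connected: the fiber $(\Omega X)^{r-1}$ is $(\mathrm{conn}(X)-1)$-connected, its $k$-fold join is $\big(k(\mathrm{conn}(X)+1)-2\big)$-connected, and the genus--dimension--connectivity theorem yields $\mathrm{secat}(\pi_{r})\le \dim(X^{r})/(\mathrm{conn}(X)+1)$ even when $\mathrm{conn}(X)=0$, which is exactly the claimed bound. So the two routes are equally available here; the category route is simply the more commonly quoted one.
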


Lower bounds for (sequential) topological complexity are slightly harder to come by. 
Rudyak gives a lower bound for $\textbf{TC}_{\textbf{r}}(X)$ in terms of classes in $H^{*}(X^{r};R^{\otimes r})$ \cite[Proposition 3.4]{rudyak2010higher}.
Namely, Rudyak proves that if one can find $l$ distinct \emph{$r$-fold zero-divisors} in $H^{*}(X^{n};R^{\otimes r})$, i.e., classes that restrict to $0$ in $H^{*}(X;R)$ when pulled back along the diagonal inclusion $\Delta: X\to X^{r}$, whose product is non-zero in $H^{*}(X^{n};R^{\otimes r})$, then $\textbf{TC}_{r}(X)\ge l$.
One problem with this lower bound is that it relies on one having a strong understanding of the cohomology ring of $X$; in the case $X=\text{conf}(n,w)$, such an understanding is elusive, so we seek different methods to bound sequential topological complexity.
Fortunately, a more tractable lower bound for $\textbf{TC}_{\textbf{r}}(X)$ can be obtained by finding ``nice'' tori in $X$. 

An \emph{$m$-torus in $X$} is a map $f:T^{m}\to X$, and we say that an $m$-torus in $X$ is \emph{homologically decomposable} if the homomorphism induced on $H_{1}$ is injective.
Additionally, we say that a pair of tori in $X$ are \emph{homologically disjoint} if the images of the two homomorphisms induced on $H_{1}$ intersect trivially.
Knudsen proved that if there exist two homologically disjoint decomposable $m$-tori in $X$, then $rm\le \textbf{TC}_{\textbf{r}}(X)$ \cite[Proposition 2.2]{knudsen2024farber}; we extend his argument to homologically decomposable disjoint tori of different dimensions.

\begin{prop}\label{disjoint tori}
If $X$ admits a homologically decomposable $m$-torus and a homologically decomposable $l$-torus that are homologically disjoint, then
\[
\textbf{TC}_{\textbf{r}}(X)\ge (r-1)m+l
\]
for all $r\ge 2$.
\end{prop}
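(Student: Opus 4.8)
The plan is to adapt Knudsen's argument for two homologically disjoint decomposable tori of equal dimension to the asymmetric case. The governing lower-bound technique for $\textbf{TC}_{\textbf{r}}$ is the zero-divisor cup-length: if one can exhibit $l$ classes in $H^{*}(X^{r};R^{\otimes r})$ that each restrict to zero under the diagonal $\Delta\colon X\to X^{r}$, and whose product is nonzero, then $\textbf{TC}_{\textbf{r}}(X)\ge l$. So the whole proof reduces to producing $(r-1)m+l$ such zero-divisors with nonvanishing product, using the two tori as a source of homology (hence, dually, cohomology) classes.

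First I would fix field coefficients, say $R=\F$, to make the cohomology of tori and the K\"unneth formula as clean as possible, and reduce to the homological statement. Let $f\colon T^{m}\to X$ and $g\colon T^{l}\to X$ be the two homologically decomposable tori whose $H_1$-images intersect trivially. Decomposability means $f_{*},g_{*}$ are injective on $H_1$, and disjointness means $\mathrm{im}(f_*)\cap\mathrm{im}(g_*)=0$ inside $H_1(X;\F)$. Because $H^{*}(T^{m})$ is an exterior algebra generated in degree $1$, I can pull back degree-one cohomology classes of $X$ that are dual to the images of $f_*$ and $g_*$; the injectivity and disjointness guarantee that the relevant products of these degree-one classes survive (do not collapse) when evaluated against the fundamental classes of the tori. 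Concretely, I would choose dual cohomology classes $a_1,\dots,a_m$ detecting $f_*H_1(T^m)$ and $b_1,\dots,b_l$ detecting $g_*H_1(T^l)$, with the disjointness ensuring these two families are ``independent'' enough that a suitably chosen top product pairs nontrivially.

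The heart of the construction is to build zero-divisors in $H^{*}(X^{r};\F^{\otimes r})$ from these classes. For each degree-one class $c\in H^{1}(X)$ and each pair of adjacent coordinates, one forms the class $1\otimes\cdots\otimes c\otimes\cdots\otimes 1$ minus the corresponding shift, i.e.\ the standard $r$-fold zero-divisor $\bar c_i := 1^{\otimes(i-1)}\otimes c\otimes 1^{\otimes(r-i)} - 1^{\otimes i}\otimes c\otimes 1^{\otimes(r-i-1)}$, which restricts to $0$ along $\Delta$ by construction. Knudsen's equal-dimensional argument takes $r$ copies' worth of the $m$ torus-classes; the asymmetric refinement is to use the $m$ classes $a_j$ across $r-1$ of the adjacent slots (contributing $(r-1)m$ zero-divisors) and the $l$ classes $b_k$ across the remaining slot (contributing $l$ zero-divisors), for a total of $(r-1)m+l$. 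The payoff is that pulling the whole product back along $f^{\times r}$ and $g^{\times r}$ into $H^{*}\big((T^m)^{r}\big)\otimes$-type targets and using homological disjointness to rule out cancellation shows the product is nonzero. I would carry this out by evaluating the top product on the homology class $[T^m]^{\otimes(r-1)}\otimes[T^l]$ (suitably interpreted in $H_*(X^r)$ via $f$ and $g$) and checking it pairs to a unit.

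The main obstacle I anticipate is precisely the nonvanishing of the product: one must verify that no cross-terms arising from the antisymmetrization in the $\bar c_i$ cancel the leading term, and this is exactly where homological disjointness of the two tori must be used in full force—without it, a $b_k$ coming from the $l$-torus could coincide (in $H_1(X)$) with some $a_j$ from the $m$-torus and create an unwanted relation in the exterior algebra that kills the product. I expect the bookkeeping to mirror Knudsen's \cite[Proposition 2.2]{knudsen2024farber}, with the only genuinely new input being that the two tori carry different numbers of generators, so the combinatorics of which slot receives which family must be arranged to maximize the surviving degree, giving the $(r-1)m+l$ count rather than $rm$. Once the nonzero product of $(r-1)m+l$ zero-divisors is in hand, Rudyak's lower bound \cite[Proposition 3.4]{rudyak2010higher} immediately yields $\textbf{TC}_{\textbf{r}}(X)\ge(r-1)m+l$.
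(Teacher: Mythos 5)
Your proposal is correct and follows essentially the same route as the paper: pull back the degree-one generators of $H^{1}(T^{m})$ and $H^{1}(T^{l})$ to classes $y_{i}, z_{j}\in H^{1}(X)$, form the adjacent-difference $r$-fold zero-divisors, take $(r-1)m$ of them from the $m$-torus and $l$ from the $l$-torus, and show the product is nonzero by evaluating against $[T^{m}]^{\otimes(r-1)}\otimes[T^{l}]$, with homological disjointness killing the cross-terms; Rudyak's bound then finishes. The only bookkeeping point to tighten is that there are only $r-1$ adjacent pairs, so the $l$ classes $z_{j}$ must be doubled up on one of the pairs already carrying the $y_{i}$ (the paper uses $\zeta^{12}_{g}\zeta^{12}_{f}$), rather than occupying a separate ``remaining slot.''
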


Our proof of Proposition \ref{disjoint tori} follows Knudsen's proof of \cite[Proposition 2.2]{knudsen2024farber}; we point the interested reader to that paper for a further discussion of the ideas motivating these results.

\begin{proof}
Let $f$ be the decomposable $m$-torus and $g$ the decomposable $l$-torus.
Since these tori are decomposable, the universal coefficients theorem implies that the induced homorphisms $f^{*}:H^{1}(X)\to H^{1}(T^{m})$ and $g^{*}:H^{1}(X)\to H^{1}(T^{l})$ are surjective.
Given that we can write $H^{1}(T^{m})\cong \bigwedge [x_{1}, \dots, x_{m}]$ and $H^{1}(T^{l})\cong \bigwedge [x_{1}, \dots, x_{l}]$, let $y_{i}$ be the preimage of $x_{i}$ in $H^{1}(X)$ induced by $f$ and $z_{i}$ be the preimage of $x_{i}$ in $H^{1}(X)$ induced by $g$.
Given a class $a\in H^{*}(X)$, consider the class $\zeta(a):=a\otimes 1-1\otimes a\in H^{*}(X\times X)$.
For $1\le i< j\le r$, we write $\zeta^{ij}(a)$ for the pullback of $\zeta(a)$ along the projection $X^{r}\to X\times X$ onto the $i^{\text{th}}$ and $j^{\text{th}}$ factors.
Setting $\zeta^{ij}_{f}:=\prod^{m}_{k=1}\zeta^{ij}(y_{k})$ and $\zeta^{ij}_{g}:=\prod^{l}_{k=1}\zeta^{ij}(z_{k})$ one can check that
\[
\zeta^{12}_{g}\zeta^{12}_{f}\prod^{r}_{k=3}\zeta^{(k-1)k}_{f}=\sum_{S_{p}\in\{1, \dots, m\}, 1\le k\le r}\pm z_{S_{1}}y_{S_{2}}\otimes z_{S_{1}^{c}}y_{S_{2}^{c}}y_{S_{3}}\otimes y_{S^{c}_{3}}y_{S_{4}}\otimes\cdots\otimes y_{S^{c}_{r-1}}y_{S_{r}}\otimes y_{S_{r}^{c}}.
\]
We claim that this product of $m(r-1)+l$ total $r$-fold zero-divisors is non-trivial.

To see that this class is non-zero, we apply $(f^{r-1}\times g)^{*}$ to it to get a class in $H^{m(r-1)+l}(T^{m(r-1)+l})$ and evaluate it on the fundamental class of $T^{m(r-1)+l}$.
One can check that any term with $S_{r}\neq \emptyset$ vanishes for degree reasons; continuing in this manner shows that the only classes that can contribute non-trivially are those such that $S_{k}=\emptyset$ for $k>2$.
Furthermore, we must have that $S_{2}=S_{1}^{c}=\emptyset$, so the only term that matters is
\[
(f^{r-1}\times g)^{*}\Big(\big(\prod^{m}_{k=1}y_{k}\big)^{\otimes(r-1)}\otimes \prod_{k=1}^{l}z_{k}\Big)
\]
Since $f$ and $g$ are homologically disjoint, it follows that we can assume that $y_{i}$, resp. $z_{i}$, evaluates to $0$ on the image of $g_{*}$, resp. $f_{*}$, so this is equal
\[
\Big(\prod^{m}_{k=1}x_{i}\Big)^{\otimes (r-1)},
\]
which evaluates to $1$ on the fundamental class of $T^{m(r-1)+l}$.
Therefore, the product $\zeta^{12}_{g}\zeta^{12}_{f}\prod^{r}_{k=3}\zeta^{(k-1)k}_{f}$ is non-trivial, yielding the bound $\textbf{TC}_{\textbf{r}}(X)\ge (r-1)m+l$.
\end{proof}

A careful examination of the proof Proposition \ref{disjoint tori} and an application of the Universal Coefficients Theorem shows that these tori yield rational $r$-fold zero-divisiors.
We will this fact to bound the sequential distributional topological complexity of $\text{conf}(n,w)$, a variation of topological complexity whose definition we recall in the next subsection. 

\subsection{Distributional Topological Complexity}

If the sequential topological complexity of $X$ is finite, then there is some covering $\{U_{i}\}$ of $X^{r}$ such that there are sections $s_{i}:U_{i}\to X^{I}$ of $\pi_{r}$.
In determining $\textbf{TC}_{\textbf{r}}(X)$, our only concern is that the number of $U_{i}$ be as small as possible; we do not care to which path in $X^{I}$ an $(x_{1}, \dots, x_{r})\in U_{i}\cap U_{j}$ gets sent.
This digital method of path-choosing has its upsides, though one could argue that it does not fully capture the complexity of the motion planning problem as it might be the case that at every point of $X^{r}$ one might need to decide between only two choices of path, not $\textbf{TC}_{r}(X)$ of them, e.g., $\R\mathbb{P}^{d}$ for $d>0$ \cite[Proposition 6.9]{knudsen2024analog}.
One way of formalizing this different method of measuring the difficultly of the motion planning problem for a metric space is to consider $\mathcal{B}_{n}(X^{I})$ the space of probability measures on the path space of $X$ whose support has at most $n$ elements.
The \emph{$r^{\text{th}}$-sequential distributional topological complexity of $X$}, denoted $\textbf{dTC}_{\textbf{r}}(X)$, is the minimal $n$ such that there is a map
\[
s_{r}:X^{r}\to \mathcal{B}_{n}(X^{I})
\]
that sends $(x_{1}, \dots, x_{r})$ to $\mathcal{B}_{n}\big((x_{1}, \dots, x_{r})^{I}\big)\subset\mathcal{B}_{n}(X^{I})$.
While this variation of topological complexity might be difficult to grasp on first glance, we note that it has a relatively simple interpretation: $\textbf{dTC}_{\textbf{r}}(X)$ is the minimal number $n$ such that each $(x_{1}, \dots, x_{r})\in X^{r}$ is contained in at most $n$ sets $U_{i}$, where $\{U_{i}\}$ is a finite open cover of $X^{r}$ with a continuous section $s_{i}$ of $\pi_{r}$ on each $U_{i}$.
For the reader more interested in distributional topological complexity we suggest \cite{jauhari2025sequential, dranishnikov2024distributional} as well as \cite{knudsen2024analog}, which talks about the closely related notion of analog topological complexity.

Like classical topological complexity, (sequential) distributional topological complexity is hard to calculate directly.
Fortunately, there are upper and lower bounds, the former of which is relatively easy to come by.

\begin{prop}\label{dtcm is less than tcm}
(Jauhari \cite[Proposition 3.4]{jauhari2025sequential})
For each $r\ge 2$,
\[
\textbf{dTC}_{\textbf{r}}(X)\le \textbf{TC}_{\textbf{r}}(X).
\]
\end{prop}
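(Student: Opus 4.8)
The plan is to convert an optimal sectional cover witnessing $\textbf{TC}_{\textbf{r}}(X)$ into a single measure-valued motion planner by spreading mass across the local sections according to a partition of unity. Concretely, suppose $\textbf{TC}_{\textbf{r}}(X)=k$, so that there is an open cover $U_{0},\dots,U_{k}$ of $X^{r}$ together with continuous sections $s_{i}:U_{i}\to X^{I}$ of $\pi_{r}$. Since $X$ is a metric space, $X^{r}$ is metrizable and hence paracompact, so I would choose a locally finite partition of unity $\{\phi_{i}\}_{i=0}^{k}$ subordinate to $\{U_{i}\}$, meaning $\mathrm{supp}(\phi_{i})\subseteq U_{i}$. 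I then define
\[
s_{r}:X^{r}\to\mathcal{B}(X^{I}),\qquad s_{r}(x)=\sum_{i=0}^{k}\phi_{i}(x)\,\delta_{s_{i}(x)},
\]
where $\delta_{\gamma}$ denotes the Dirac measure at a path $\gamma$, and each summand is read as $0$ wherever $\phi_{i}$ vanishes, so that the formula makes sense even at points where $s_{i}$ is undefined.

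The next step is to verify the properties that make $s_{r}$ a distributional motion planner of the required size. First, at any point $x=(x_{1},\dots,x_{r})$ the number of indices with $\phi_{i}(x)>0$ is at most the number of sets $U_{i}$ containing $x$, hence at most $k+1$; thus $s_{r}(x)$ is supported on at most $k+1$ paths and $s_{r}$ lands in $\mathcal{B}_{k+1}(X^{I})$. Second, because each $s_{i}$ is a section of $\pi_{r}$, every path $s_{i}(x)$ in the support of $s_{r}(x)$ satisfies $\pi_{r}(s_{i}(x))=x$, so $s_{r}(x)$ is supported in the fiber $\pi_{r}^{-1}(x)=(x_{1},\dots,x_{r})^{I}$; this is exactly the condition that $s_{r}$ be a section in the distributional sense. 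Once the reduced normalizations of the two invariants are aligned, landing in $\mathcal{B}_{k+1}(X^{I})$ gives $\textbf{dTC}_{\textbf{r}}(X)\le k=\textbf{TC}_{\textbf{r}}(X)$.

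The one point requiring genuine care—and the step I expect to be the main obstacle—is the continuity of $s_{r}$ as a map into $\mathcal{B}_{k+1}(X^{I})$ with its Wasserstein topology, since naively one worries about mass escaping to sections that become undefined. Here the subordination condition does the work: if $x_{0}\notin U_{i}$, then $x_{0}\notin\mathrm{supp}(\phi_{i})$, so $\phi_{i}$ vanishes on a whole neighborhood of $x_{0}$ and the $i$-th summand contributes nothing there; conversely, any index $i$ whose summand is nonzero near $x_{0}$ has $x_{0}\in U_{i}$, where $s_{i}$ is defined and continuous. By local finiteness, near $x_{0}$ the map $s_{r}$ is therefore a finite sum of terms $\phi_{i}(x)\,\delta_{s_{i}(x)}$, each continuous into the space of measures, so no mass ever escapes to an undefined section. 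I would isolate this continuity verification as the technical heart of the argument, recording if necessary the explicit estimate bounding the Wasserstein distance between $s_{r}(x)$ and $s_{r}(x_{0})$ by the variation of the finitely many active weights $\phi_{i}$ and sections $s_{i}$.
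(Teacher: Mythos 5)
This proposition is quoted from Jauhari and the paper supplies no proof of its own; your partition-of-unity argument is exactly the standard proof in that reference and is correct, including the key point that subordination of $\{\phi_{i}\}$ to $\{U_{i}\}$ forces each summand to vanish on a whole neighborhood of any point outside $U_{i}$, so continuity into the space of measures holds. Your remark about aligning the reduced normalizations is also the right call: under the paper's literal definition of $\mathcal{B}_{n}$ (support of size at most $n$), landing in $\mathcal{B}_{k+1}(X^{I})$ would only give $\textbf{dTC}_{\textbf{r}}(X)\le \textbf{TC}_{\textbf{r}}(X)+1$, and the stated inequality requires both invariants to be read in their reduced versions.
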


Lower bounds for $\textbf{dTC}_{\textbf{r}}(X)$ are a bit more harder to find, though we will be able to adapt the following result of Jauhari for our purposes.

\begin{lem}\label{Jauhari lower bound}
(Jauhari  \cite[Theorem 4.4]{jauhari2025sequential}) Suppose that $\alpha^{*}_{i}\in H^{k_{i}}\big(SP^{n!}(X^{r});R)$ for $1\le i\le n$, some ring $R$, and $k_{i}\ge 1$, are cohomology classes such that $\Delta^{*}_{n}(\alpha^{*}_{i})=0$, where $\Delta_{n}:SP^{n!}(X)\to SP^{n!}(X^{r})$ is induced by the standard inclusion of $X$ into $X^{r}$ along the diagonal.
Let $\alpha_{i}$ be their images in $H^{k_{i}}(X^{r};R)$ induced by the diagonal inclusion of $X^{r}$ into $SP^{n!}(X^{r})$, and assume that the cup product $\alpha_{1}\smile \alpha_{2}\smile \cdots \smile \alpha_{n}\neq 0\in H^{*}\big(X^{r};R\big)$.
Then 
\[
\textbf{dTC}_{\textbf{r}}(X)\ge n.
\]
\end{lem}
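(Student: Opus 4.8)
The plan is to establish Lemma~\ref{Jauhari lower bound}, which is attributed to Jauhari, by leveraging the analogy between distributional topological complexity and the cohomological lower bounds familiar from classical $\textbf{TC}_{\textbf{r}}$, adapted to the symmetric product setting. The key structural fact is that $\textbf{dTC}_{\textbf{r}}(X)$ is bounded below by a sectional-category-type invariant of the map $\pi_r$ measured through the \emph{symmetric product} $SP^{N}(X^I)$ rather than $X^I$ itself; the barycenter map $\mathcal{B}_n(X^I)\to SP^{n!}(X^I)$ (or a suitable finite symmetric power absorbing the $n$-fold multiplicities, hence the appearance of $n!$) converts a distributional section into an honest section up to symmetrization. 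First I would recall the comparison between $\textbf{dTC}_{\textbf{r}}(X)$ and the \emph{symmetric sectional category} $\mathrm{secat}^{SP}(\pi_r)$: a map $s_r\colon X^r\to \mathcal{B}_n(X^I)$ splitting $\pi_r$ distributionally, with fiberwise support in $(x_1,\dots,x_r)^I$, induces a lift of the diagonal inclusion $X^r\hookrightarrow SP^{n!}(X^r)$ through the symmetrized path fibration, so that any obstruction to such a lift forces $\textbf{dTC}_{\textbf{r}}(X)\ge n$.

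Next I would set up the cohomological obstruction. The classes $\alpha_i^*\in H^{k_i}(SP^{n!}(X^r);R)$ with $\Delta_n^*(\alpha_i^*)=0$ are, by construction, zero-divisors \emph{at the level of the symmetric product}, i.e.\ they vanish when restricted along the symmetrized diagonal $SP^{n!}(X)\to SP^{n!}(X^r)$. I would then pull these back along the diagonal inclusion $X^r\hookrightarrow SP^{n!}(X^r)$ to obtain the classes $\alpha_i\in H^{k_i}(X^r;R)$, and observe that the hypothesis $\Delta_n^*(\alpha_i^*)=0$ guarantees each $\alpha_i$ lies in the kernel of the ordinary diagonal restriction $\Delta^*\colon H^*(X^r;R)\to H^*(X;R)$, making each $\alpha_i$ a genuine $r$-fold zero-divisor. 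The nontriviality of the cup product $\alpha_1\smile\cdots\smile\alpha_n\neq 0$ is then exactly the statement that the relevant relative/symmetric cohomology obstruction does not vanish.

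The core argument is that each factor $\alpha_i$, being pulled back from a class killed by the symmetric diagonal, can be realized as living in the relative cohomology $H^*\big(SP^{n!}(X^r),\, SP^{n!}(X)\big)$, whose image in $H^*(X^r;R)$ detects one unit of symmetric sectional category; multiplicativity of relative cup products then shows that a nonzero $n$-fold product of such classes obstructs covering $X^r$ by fewer than $n$ symmetric-section pieces. Concretely, I would argue the contrapositive: if $\textbf{dTC}_{\textbf{r}}(X)< n$, then $X^r$ admits a distributional motion planner of multiplicity at most $n-1$, which translates into a factorization of the diagonal through $SP^{(n-1)!}$-type data forcing the product $\alpha_1\smile\cdots\smile\alpha_n$ to vanish by a standard relative-cohomology length argument, contradicting the hypothesis. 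The main obstacle I anticipate is making the bookkeeping of the symmetric-power index precise---tracking why $n!$ (rather than $n$ or some other power) is the correct size of symmetric product to host the classes $\alpha_i^*$, and verifying that the barycentric/symmetrization passage from $\mathcal{B}_n(X^I)$ to $SP^{n!}$ is compatible with both the fibration $\pi_r$ and the cup-product structure needed for the zero-divisor length estimate. Since this lemma is quoted directly from Jauhari \cite[Theorem 4.4]{jauhari2025sequential}, however, I would ultimately cite that reference for the full verification rather than reprove the symmetric-product machinery from scratch.
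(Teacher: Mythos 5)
The paper does not prove this lemma: it is imported verbatim from Jauhari \cite[Theorem 4.4]{jauhari2025sequential}, and your proposal likewise ends by deferring to that reference, so the two treatments coincide. Your intervening sketch of the symmetric-product mechanism is a reasonable heuristic for what Jauhari actually does, but since neither you nor the paper supplies a self-contained argument, there is nothing further to compare.
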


This result is stated in terms of Alexander--Spanier cohomology if $X$ is a locally finite CW complex this coincides with singular cohomology, and Lemma \ref{Jauhari lower bound} can be modified to yield a simpler lower bound for $dTC_{m}(X)$.

\begin{cor}\label{rational zero-divisor lower bound for dtcm}
Let $X$ have the homotopy type of a finite simplicial complex. 
The rational $r$-fold zero-divisor cup-length of $X^{r}$ is a lower bound for $\textbf{dTC}_{\textbf{r}}(X)$.
\end{cor}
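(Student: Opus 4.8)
The plan is to deduce the corollary from Lemma \ref{Jauhari lower bound} by manufacturing, from a family of rational $r$-fold zero-divisors with nonzero product, classes on the symmetric product $SP^{n!}(X^{r})$ that satisfy the hypotheses of that lemma. Throughout I take $R=\Q$ and write $Z=X^{r}$, which is path-connected because $X=\text{conf}(n,w)$ is. Suppose $\beta_{1},\dots,\beta_{n}\in H^{*}(Z;\Q)$ are rational $r$-fold zero-divisors, so each $\beta_{i}$ lies in positive degree $k_{i}\ge 1$ and satisfies $\Delta^{*}\beta_{i}=0$ for the diagonal $\Delta:X\to X^{r}$, and assume their cup product $\beta_{1}\smile\cdots\smile\beta_{n}$ is nonzero; I must then produce $n$ as a lower bound for $\textbf{dTC}_{\textbf{r}}(X)$. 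Since $X$ has the homotopy type of a finite simplicial complex, so does each relevant symmetric product, and I may freely use singular cohomology, which here agrees with the Alexander--Spanier theory of Lemma \ref{Jauhari lower bound}.

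The central tool is the description of rational cohomology of a symmetric product via invariants. Writing $q:Z^{n!}\to SP^{n!}(Z)=Z^{n!}/S_{n!}$ for the quotient, the invertibility of $|S_{n!}|$ in $\Q$ yields an isomorphism $q^{*}:H^{*}(SP^{n!}(Z);\Q)\xrightarrow{\cong}H^{*}(Z^{n!};\Q)^{S_{n!}}$ onto the invariants; in particular $q^{*}$ is injective. For each $i$ I form the symmetrized class $\sigma(\beta_{i}):=\sum_{j=1}^{n!}\mathrm{pr}_{j}^{*}\beta_{i}\in H^{k_{i}}(Z^{n!};\Q)$, where $\mathrm{pr}_{j}$ is the $j$-th projection. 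Because every summand carries $\beta_{i}$ in a single tensor slot and the unit elsewhere, the symmetric group merely permutes the summands with trivial Koszul sign, so $\sigma(\beta_{i})$ is $S_{n!}$-invariant and hence defines a unique class $\alpha_{i}^{*}\in H^{k_{i}}(SP^{n!}(Z);\Q)$ with $q^{*}\alpha_{i}^{*}=\sigma(\beta_{i})$.

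Next I would verify the two hypotheses of Lemma \ref{Jauhari lower bound} by naturality. The diagonal $\Delta_{n}:SP^{n!}(X)\to SP^{n!}(Z)$ lifts to $\Delta^{n!}:X^{n!}\to Z^{n!}$ compatibly with the quotient maps, so $q_{X}^{*}\,\Delta_{n}^{*}\alpha_{i}^{*}=(\Delta^{n!})^{*}\sigma(\beta_{i})=\sum_{j}(\mathrm{pr}_{j})^{*}(\Delta^{*}\beta_{i})=0$ since $\Delta^{*}\beta_{i}=0$, and injectivity of $q_{X}^{*}$ forces $\Delta_{n}^{*}\alpha_{i}^{*}=0$. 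Likewise, the $n!$-fold diagonal inclusion $d:X^{r}\to SP^{n!}(X^{r})$ lifts to the genuine diagonal $\tilde d:Z\to Z^{n!}$ with $q\circ\tilde d=d$ and $\mathrm{pr}_{j}\circ\tilde d=\mathrm{id}_{Z}$ for every $j$, so the image is $\alpha_{i}:=d^{*}\alpha_{i}^{*}=\tilde d^{*}\sigma(\beta_{i})=\sum_{j=1}^{n!}\beta_{i}=n!\,\beta_{i}$.

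Finally, combining these computations gives $\alpha_{1}\smile\cdots\smile\alpha_{n}=(n!)^{n}(\beta_{1}\smile\cdots\smile\beta_{n})$, which is nonzero in $H^{*}(X^{r};\Q)$ because $(n!)^{n}$ is a nonzero rational and the $\beta_{i}$ were chosen with nonzero product. Thus $\alpha_{1}^{*},\dots,\alpha_{n}^{*}$ satisfy every hypothesis of Lemma \ref{Jauhari lower bound} with $R=\Q$, and the lemma yields $\textbf{dTC}_{\textbf{r}}(X)\ge n$; taking the largest such family realizes the rational $r$-fold zero-divisor cup-length as a lower bound. The step I expect to require the most care is the identification $q^{*}:H^{*}(SP^{n!}(Z);\Q)\cong H^{*}(Z^{n!};\Q)^{S_{n!}}$ together with the verification that the symmetrized classes are genuinely invariant and that the lifting squares for $\Delta_{n}$ and $d$ commute; once these are in place the remaining computations are immediate.
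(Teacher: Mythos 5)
Your argument is correct, and it follows the same overall strategy as the paper --- both deduce the corollary from Lemma \ref{Jauhari lower bound} --- but you treat the passage to the symmetric product differently. The paper's proof is a one-line citation: it invokes the surjectivity of $\delta_{l}^{*}:H^{*}\big(SP^{l}(Y);\Q\big)\to H^{*}(Y;\Q)$ from Dranishnikov--Jauhari and declares the result immediate. You instead build explicit lifts by symmetrization, using the transfer isomorphism $q^{*}:H^{*}\big(SP^{n!}(Z);\Q\big)\cong H^{*}(Z^{n!};\Q)^{S_{n!}}$ and defining $\alpha_{i}^{*}$ by $q^{*}\alpha_{i}^{*}=\sum_{j}\mathrm{pr}_{j}^{*}\beta_{i}$. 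This buys you something concrete: surjectivity of $\delta^{*}$ alone only guarantees that each rational $r$-fold zero-divisor $\beta_{i}$ has \emph{some} preimage in $H^{*}\big(SP^{n!}(X^{r});\Q\big)$, and an arbitrary preimage need not satisfy the hypothesis $\Delta_{n}^{*}\alpha_{i}^{*}=0$ of Lemma \ref{Jauhari lower bound}, since the corresponding map for $SP^{n!}(X)$ is not injective in general; your symmetrized lift vanishes under $\Delta_{n}^{*}$ by construction, via $q_{X}^{*}\Delta_{n}^{*}\alpha_{i}^{*}=\sum_{j}\mathrm{pr}_{j}^{*}\Delta^{*}\beta_{i}=0$ and injectivity of $q_{X}^{*}$. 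So your proof is self-contained where the paper's is citation-based, and it makes explicit a verification the paper leaves to the reference. Two small cosmetic points: the corollary concerns an arbitrary $X$ with the homotopy type of a finite simplicial complex, so you should not justify connectedness by appealing to $X=\text{conf}(n,w)$; and the transfer isomorphism for the non-free $S_{n!}$-action should be justified by replacing $X$ with a finite simplicial complex and working with a regular simplicial action (or in Alexander--Spanier cohomology, which agrees with singular here). Neither affects validity, and the factor $(n!)^{n}$ is of course harmless over $\Q$.
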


\begin{proof}
This follows immediately from Lemma \ref{Jauhari lower bound} and the fact that for any finite simplicial complex $Y$ and any $l\ge 1$, the inclusion $\delta_{l}:Y\to SP^{l}(Y)$ that sends $y$ to $(y, \dots, y)$ induces a surjection from $H^{*}\big(SP^{l}(Y);\Q\big)$ to  $H^{*}(Y;\Q)$ \cite[Proposition 4.3]{dranishnikov2024distributional}.
In particular, this holds for $Y=X^{r}$.
\end{proof}

Since the proof of Proposition \ref{disjoint tori} \cite[Proposition 2.2]{knudsen2024farber} shows that if can find a homologically decomposable $m$-torus in $X$ that is homologically disjoint from a homologically decomposable $l$-torus in $X$, then one can find a non-zero product of $r(m-1)+l$ total $r$-fold zero-divisors in $H^{1}(X^{r};\Q)$, which yields the following corollary: 

\begin{cor}\label{tori yield dtc lower bound}
Let $X$ have the homotopy type of a finite simplicial complex.
If $X$ admits a homologically decomposable $m$-torus and a homologically decomposable $l$-torus that are homologically disjoint, then 
\[
\textbf{dTC}_{\textbf{r}}(X)\ge r(m-1)+l
\]
for all $r>1$.
\end{cor}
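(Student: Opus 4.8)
The plan is to read the bound off Corollary \ref{rational zero-divisor lower bound for dtcm} together with the zero-divisor product already manufactured in the proof of Proposition \ref{disjoint tori}. Since $X$ has the homotopy type of a finite simplicial complex, Corollary \ref{rational zero-divisor lower bound for dtcm} guarantees that the rational $r$-fold zero-divisor cup-length of $X^{r}$ is a lower bound for $\textbf{dTC}_{\textbf{r}}(X)$. It therefore suffices to produce a nonzero product of the asserted number of rational $r$-fold zero-divisors in $H^{*}(X^{r};\Q)$, and the work splits into checking that the Proposition \ref{disjoint tori} construction lives over $\Q$ and that its non-vanishing persists rationally.

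First I would note that, after the Universal Coefficients observation recorded immediately after the proof of Proposition \ref{disjoint tori}, the entire construction there takes place in rational cohomology: the classes $y_{k}$ and $z_{k}$ lift generators of $H^{1}(T^{m};\Q)$ and $H^{1}(T^{l};\Q)$ along the surjections $f^{*}$ and $g^{*}$, so each $\zeta^{ij}(y_{k})$ and $\zeta^{ij}(z_{k})$ is a class in $H^{1}(X^{r};\Q)$ that restricts to $0$ along the diagonal $\Delta\colon X\to X^{r}$, i.e. a rational $r$-fold zero-divisor. Counting the factors in the product $\zeta^{12}_{g}\zeta^{12}_{f}\prod_{k=3}^{r}\zeta^{(k-1)k}_{f}$ recovers the number $r(m-1)+l$ of zero-divisors asserted in the statement. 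I would then invoke the evaluation argument of Proposition \ref{disjoint tori} verbatim to conclude that this product is nonzero in $H^{*}(X^{r};\Q)$: pulling back along $f^{r-1}\times g$ annihilates every term for degree reasons except $\big(\prod_{k=1}^{m}x_{k}\big)^{\otimes(r-1)}\otimes\prod_{k=1}^{l}x_{k}$, which pairs to $1$ with the fundamental class of the product torus precisely because $f$ and $g$ are homologically disjoint. Hence the rational $r$-fold zero-divisor cup-length of $X^{r}$ is at least $r(m-1)+l$, and Corollary \ref{rational zero-divisor lower bound for dtcm} delivers the stated bound.

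The genuinely substantive inputs — the non-vanishing of the product and the symmetric-product lifting underlying the cup-length estimate — are already in hand from Proposition \ref{disjoint tori} and Corollary \ref{rational zero-divisor lower bound for dtcm}, so the remaining content is essentially bookkeeping. The one step that demands care, and the place I expect any friction, is the passage to $\Q$-coefficients: I must confirm both that each factor is a bona fide class in $H^{*}(X^{r};\Q)$, so that the hypotheses of Corollary \ref{rational zero-divisor lower bound for dtcm} are met, and that the non-vanishing proved in Proposition \ref{disjoint tori} survives rationally. The latter is immediate once one observes that the distinguished surviving term evaluates to $\pm1$ on a fundamental class, an integral — hence rational — statement, so no subtlety about torsion or field of coefficients intervenes.
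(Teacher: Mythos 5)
Your argument is exactly the paper's: the corollary is deduced by observing that the $r$-fold zero-divisor product constructed in the proof of Proposition \ref{disjoint tori} lives in rational cohomology and remains nonzero there, and then feeding the resulting cup-length estimate into Corollary \ref{rational zero-divisor lower bound for dtcm}. One bookkeeping caveat: the product $\zeta^{12}_{g}\zeta^{12}_{f}\prod_{k=3}^{r}\zeta^{(k-1)k}_{f}$ has $l+m+(r-2)m=(r-1)m+l$ factors, not $r(m-1)+l$ as you (and the corollary's statement) assert --- this appears to be a typo inherited from the paper, and $(r-1)m+l$ is the bound the paper actually invokes in its subsequent applications, so your factor count should be corrected accordingly.
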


Given Proposition \ref{disjoint tori} and Corollary \ref{tori yield dtc lower bound}, we seek a disjoint pair homologically decomposable tori in $\text{conf}(n,w)$ to bound its sequential (distributional) topological complexity.

\section{$\text{conf}(n,w)$}

We begin our calculation of $(\textbf{d})\textbf{TC}_{\textbf{r}}\big(\text{conf}(n,w)\big)$ by recalling results of Alpert, Kahle, and MacPherson \cite{alpert2021configuration} and Alpert and Manin \cite{alpert2021configuration1}.
These result will allow us to compute the homotopy dimension of $\text{conf}(n,w)$, giving us an upper bound for $\textbf{TC}_{\textbf{r}}\big(\text{conf}(n,w)\big)$ via Proposition \ref{upper bound for seq top}.
They also point toward the existence of large homologically decomposable disjoint tori, which, via Proposition \ref{disjoint tori} and Corollary \ref{tori yield dtc lower bound}, yields a lower bound for $(\textbf{d})\textbf{TC}_{\textbf{r}}\big(\text{conf}(n,w)\big)$.
We will spend the majority of our time investigating these tori.

While $\text{conf}(n,w)$ is relatively easy to conceptualize---for an example of a point in $\text{conf}(n,w)$, see Figure \ref{conf52}---it is not easy to work with.
As such, we recall a finite cellular complex introduced by Blagojevi\'{c} and Ziegler in \cite{blagojevic2014convex} called $\text{cell}(n,w)$.
Alpert, Kahle, and MacPherson proved that this complex is homotopy equivalent to $\text{conf}(n,w)$ \cite[Theorem 3.1]{alpert2021configuration}), and we will use it to get an upper bound on the sequential topological complexity of $\text{conf}(n,w)$.

\begin{figure}[h]
\centering
\captionsetup{width=.8\linewidth}
\includegraphics[width = 12cm]{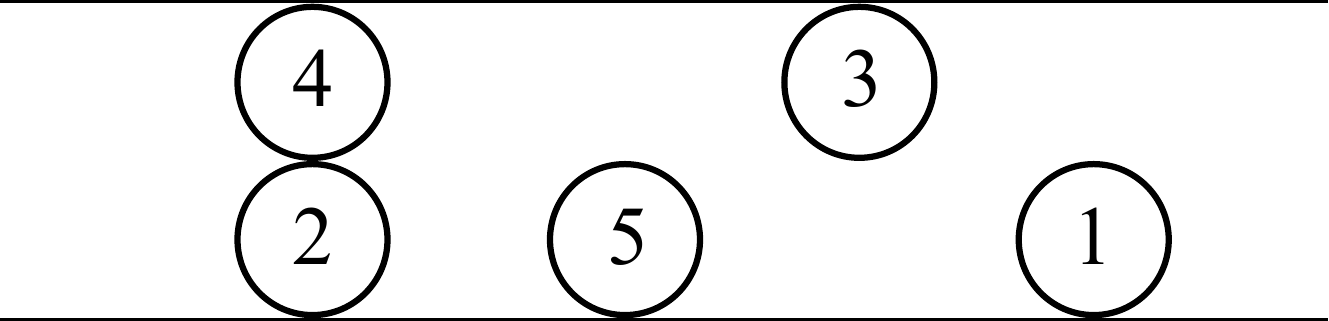}
\caption{A point in $\text{conf}(5,2)$.
Note that two disks can be aligned vertically, but three cannot.
}
\label{conf52}
\end{figure}

Let \emph{$\text{cell}(n)$} denote the cellular complex whose cells are represented by \emph{symbols} consisting of an ordering of the numbers $1, \dots, n$ separated by vertical bars into \emph{blocks} such that no block is empty. 
A cell $f\in \text{cell}(n)$ is a codimensional-$1$ \emph{face} of a cell $g\in \text{cell}(n)$ if $g$ can be obtained by deleting a bar in $f$ and shuffling the resulting block.
We say that $g$ is a \emph{co-face} of $f$.
It follows that cells represented by symbols with no bars are the top-dimensional cells of $\text{cell}(n)$.
It follows that we can view $\text{cell}(n)$ as an $(n-1)$-dimensional complex, and each bar in a symbol lowers the dimension of the corresponding cell by $1$.

By restricting how big a block can be in $\text{cell}(n)$, one gets the cellular complex $\emph{\text{cell}(n,w)}$.
This is the subcomplex of $\text{cell}(n)$ consisting of the cells represented by symbols whose blocks have at most $w$ elements; see Figure \ref{cell32} for an example.
The following result of Alpert, Kahle, and MacPherson allows us to study $\text{cell}(n,w)$ in place of $\text{conf}(n,w)$.

\begin{figure}[h]
\centering
\captionsetup{width=.8\linewidth}
\includegraphics[width = 8cm]{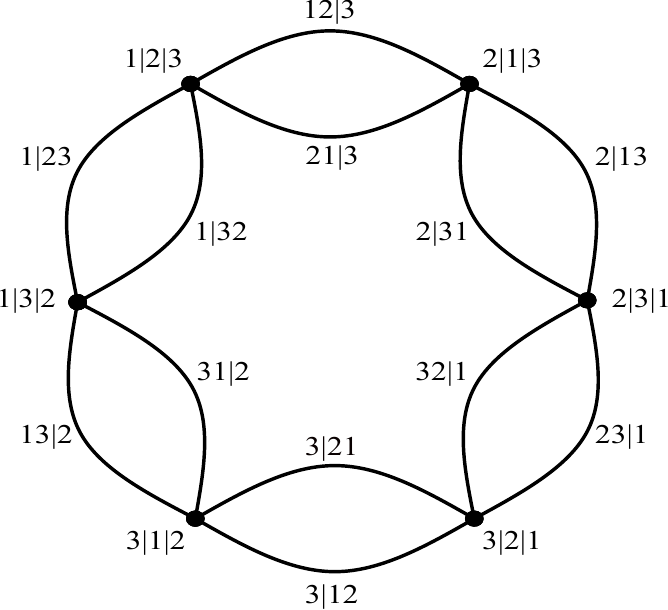}
\caption{The cellular complex $\text{cell}(3,2)$.
The loop corresponding the $1$-cells $1\,2|3$ and $2\,1|3$ corresponds to disks $1$ and $2$ orbiting each other while disk $3$ sits to the right of them.
See the right side of Figure \ref{disjointtoriconf32}.}
\label{cell32}
\end{figure}

\begin{prop}
(Alpert--Kahle--MacPherson \cite[Theorem 3.1]{alpert2021configuration}) There is an $S_{n}$-equivariant homotopy equivalence $\text{conf}(n,w)\simeq \text{cell}(n,w)$.
\end{prop}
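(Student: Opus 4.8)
The plan is to produce an explicit $S_{n}$-equivariant deformation retraction of $\text{conf}(n,w)$ onto a subspace $C$ that is homeomorphic to the geometric realization of $\text{cell}(n,w)$, matching open cells with symbols. The idea is to exploit the infinite length of the strip to \emph{comb} every configuration into a canonical \emph{column form}: keeping the disks pairwise disjoint and inside the strip throughout, I would flow a configuration to one in which the disks are grouped into vertical columns sitting at well-separated $x$-coordinates, each column holding at most $w$ disks (the most that fit in a width-$w$ strip). Concretely, one sorts the disks by their $x$-coordinates, recognizes maximal clusters of mutually horizontally-overlapping disks as the columns, slides distinct clusters apart horizontally, and aligns the disks of each cluster onto a common vertical line. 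The image of this retraction is the space $C$ of canonical column configurations.

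Next I would identify $C$ with $\text{cell}(n,w)$. A canonical configuration is recorded by its combinatorial type together with continuous vertical data: reading the columns left to right and the disks top to bottom within each column gives exactly a symbol whose blocks have size at most $w$, while the relative vertical positions inside the columns provide the interior coordinates of a cell. For a symbol with blocks of sizes $k_{1},\dots,k_{b}$ these coordinates sweep out a cell of dimension $\sum_{i}(k_{i}-1)=n-b=(n-1)-(b-1)$, matching the stated dimension count in which each bar drops the dimension by one. I would then check that the degenerations of such a cell---where a column breaks into two as two of its disks pass from vertically stacked to horizontally side-by-side---realize precisely the face relation ``delete a bar and shuffle the resulting block,'' so that $C$ and $\text{cell}(n,w)$ agree as CW complexes, face poset included.

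Equivariance is then automatic: $S_{n}$ acts on $\text{conf}(n,w)$ by relabeling disks and on $\text{cell}(n,w)$ by permuting the entries of the symbols, and since the combing flow refers only to the geometric positions of the disks it commutes with relabeling. The hard part will be continuity. As disks move from side-by-side to vertically stacked, or as the overlap pattern of a cluster changes, the column structure jumps combinatorially, yet the retraction must vary continuously and must never force two disks to overlap or push a disk out of the strip. Arranging a clustering rule and vertical-alignment flow that depend continuously on the configuration even along these transition loci---essentially the convex/sweeping construction of Blagojevi\'{c}--Ziegler---and verifying that the homotopy stays inside $\text{conf}(n,w)$ is the technical heart of the argument; a cleaner but less explicit alternative would be to build a good cover of $\text{conf}(n,w)$ by the open strata and invoke the nerve lemma, reducing the problem to checking that each stratum and each nonempty intersection is contractible.
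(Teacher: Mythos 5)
First, note that the paper does not prove this proposition at all: it is quoted from Alpert--Kahle--MacPherson \cite[Theorem 3.1]{alpert2021configuration}, so there is no in-paper argument to measure your attempt against. Judged on its own, your sketch contains a genuine conceptual error beyond the continuity issue you flag. You identify the open cell of a symbol with blocks of sizes $k_{1},\dots,k_{b}$ with the locus of ``canonical column configurations'' of that combinatorial type, and you count its dimension as $\sum_{i}(k_{i}-1)=n-b$ by recording only the relative vertical positions. But that number is the \emph{codimension} of the column stratum in $\text{conf}(n,w)$ (the stratum itself has dimension $n+b$: one $x$-coordinate per column plus all $n$ of the $y$-coordinates, inside the $2n$-dimensional space), and the closure order of the column stratification runs \emph{opposite} to the face poset of $\text{cell}(n,w)$: a single tall column is a degeneration of two side-by-side columns, whereas in $\text{cell}(n,w)$ the one-block symbols are the top-dimensional cells and the many-block symbols are their faces. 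Concretely, the closed $1$-cell $1\,2|3$ of $\text{cell}(3,2)$ is half of the orbit loop in which disk $1$ passes around disk $2$, with endpoints the $0$-cells $1|2|3$ and $2|1|3$; the configurations in which disk $1$ sits exactly above disk $2$ form a single interior point of that cell, not the cell itself. The Blagojevi\'{c}--Ziegler complex is in effect dual (transverse) to your column strata, which is exactly why the dimensions come out complementary.

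This mismatch is fatal to the proposed retraction: the set $C$ of canonical column configurations, with distinct combinatorial types pinned at well-separated canonical $x$-coordinates, is a disjoint union of contractible pieces indexed by symbols, and the connected space $\text{conf}(n,w)$ cannot deformation retract onto a disconnected subspace; nor does $C$ realize the attaching rule ``delete a bar and shuffle.'' A correct argument along these lines must instead map the abstract complex $\text{cell}(n,w)$ \emph{into} the configuration space by explicit orbiting homotopies, so that each closed cell is swept out transversally to the column strata, and then prove that this map is an $S_{n}$-equivariant homotopy equivalence; that is the actual content of \cite[Theorem 3.1]{alpert2021configuration}, building on Blagojevi\'{c}--Ziegler's model for $F_{n}(\R^{2})$. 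Your nerve-lemma fallback would likewise need substantial reworking, since the column ``strata'' do not form an open cover with contractible intersections in any evident way.
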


One can quickly check that
\[
\dim\big(\text{cell}(n,w)\big)=n-\Big\lceil\frac{n}{w}\Big\rceil;
\]
for example, see \cite[Proposition 3.2]{wawrykow2024topological}.

Since $\text{conf}(n,w)$ is clearly connected, it follows from Propositions \ref{upper bound for seq top} and \ref{dtcm is less than tcm} that we have the following upper bounds for the $r^{\text{th}}$-sequential (distributional) topological complexity of $\text{conf}(n,w)$.

\begin{prop}\label{upper bound for seq top of conf}
\[
\textbf{dTC}_{\textbf{r}}\big(\text{conf}(n,w)\big)\le \textbf{TC}_{\textbf{r}}\big(\text{conf}(n,w)\big)\le r\Big(n-\Big\lceil\frac{n}{w}\Big\rceil\Big).
\]
\end{prop}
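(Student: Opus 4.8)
The plan is to derive both inequalities directly from the results already assembled above, so that no new machinery is needed. The left-hand inequality is immediate: Proposition \ref{dtcm is less than tcm} gives $\textbf{dTC}_{\textbf{r}}(X)\le\textbf{TC}_{\textbf{r}}(X)$ for every path-connected $X$, and applying this with $X=\text{conf}(n,w)$ settles this half at once.

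For the right-hand inequality, I would feed the homotopy-theoretic data of $\text{conf}(n,w)$ into the Basabe--Gonz\'alez--Rudyak--Tamaki bound of Proposition \ref{upper bound for seq top}, which requires two inputs: a bound on the homotopy dimension and a bound on the connectivity. First, since $\text{conf}(n,w)\simeq\text{cell}(n,w)$ and $\text{cell}(n,w)$ is a finite CW complex of dimension $n-\lceil\frac{n}{w}\rceil$, the homotopy dimension satisfies $\text{hdim}\big(\text{conf}(n,w)\big)\le n-\lceil\frac{n}{w}\rceil$. Second, $\text{conf}(n,w)$ is path-connected, so $\text{conn}\big(\text{conf}(n,w)\big)\ge 0$ and hence $\text{conn}\big(\text{conf}(n,w)\big)+1\ge 1$. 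Substituting these into Proposition \ref{upper bound for seq top} yields
\[
\textbf{TC}_{\textbf{r}}\big(\text{conf}(n,w)\big)\le r\frac{\text{hdim}\big(\text{conf}(n,w)\big)}{\text{conn}\big(\text{conf}(n,w)\big)+1}\le r\Big(n-\Big\lceil\frac{n}{w}\Big\rceil\Big),
\]
which is the desired bound; chaining it with the first inequality gives the full statement.

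There is essentially no obstacle here, and the only points meriting care are the two estimates being fed into the bound. For the dimension, I want to be sure to use the \emph{homotopy} dimension, which is precisely why I invoke the homotopy equivalence with the finite complex $\text{cell}(n,w)$ rather than arguing about $\text{conf}(n,w)$ directly. For the connectivity, the key observation is that a larger value of $\text{conn}$ would only enlarge the denominator and shrink the bound, so the crude estimate $\text{conn}\ge 0$ coming from path-connectedness already suffices to reach the stated upper bound; one need not compute the exact connectivity of $\text{conf}(n,w)$. Since the denominator is $1$, the right-hand side $r\big(n-\lceil\frac{n}{w}\rceil\big)$ is already an integer, so no rounding issue arises in reading off the bound.
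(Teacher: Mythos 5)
Your proof is correct and matches the paper's argument exactly: the paper also obtains the left inequality from Proposition \ref{dtcm is less than tcm} and the right one from Proposition \ref{upper bound for seq top}, using $\dim\big(\text{cell}(n,w)\big)=n-\lceil\frac{n}{w}\rceil$ for the homotopy dimension and connectedness of $\text{conf}(n,w)$ for the denominator. No issues.
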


Having found an upper bound for $(\textbf{d})\textbf{TC}_{\textbf{r}}\big(\text{conf}(n,w)\big)$, we begin the more challenging task of finding strong lower bounds.
To approach this problem we recall Alpert and Manin's basis for the homology of $\text{conf}(n,w)$.

If $n\le w$, we have that $\text{conf}(n,w)\simeq F_{n}(\R^{2})$.
In this case a \emph{wheel} \emph{$W(i_{1}, \dots, i_{n})$} is an embedded $(n-1)$-torus in $\text{conf}(n,w)$ corresponding the disks $i_{1}$ and $i_{2}$ orbiting each other, the disk $i_{3}$ independently orbiting the disks $i_{1}$ and $i_{2}$, the disk $i_{4}$ independently orbiting $i_{1}$, $i_{2}$, and $i_{3}$, etc., e.g., see Figure \ref{W132}.

\begin{figure}[h]
\centering
\captionsetup{width=.8\linewidth}
\includegraphics[width = 8cm]{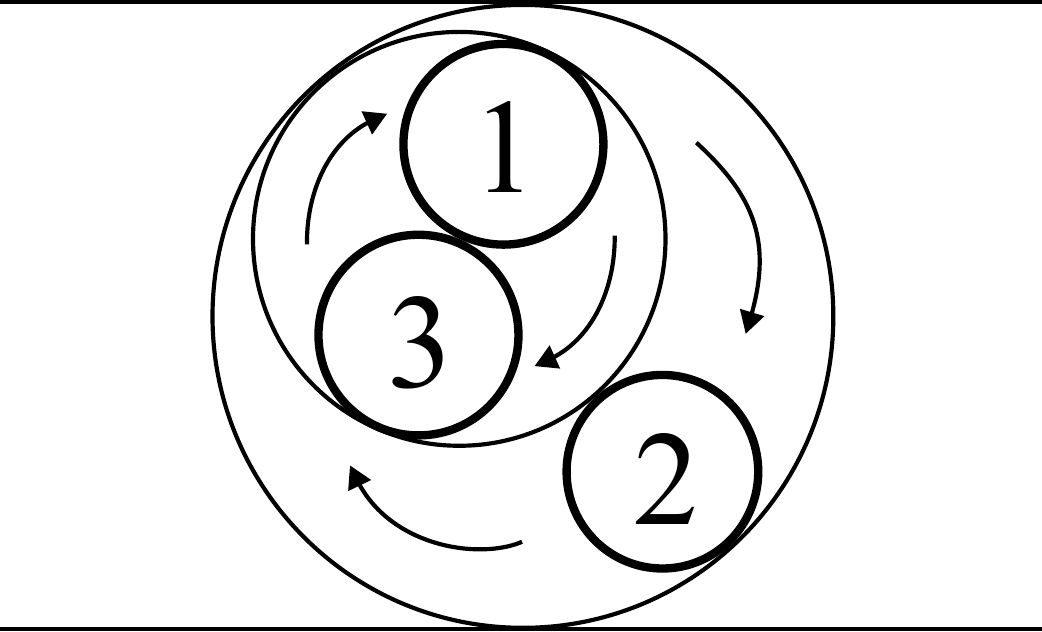}
\caption{The wheel $W(1,3,2)$ in $\text{conf}(3,3)$.
Disks $1$ and $3$ orbit each other, while disk $2$ independently orbits them.}
\label{W132}
\end{figure}

We will indiscriminately use the word wheel when talking about either the embedded torus or the corresponding class in homology.
Additionally, we can take \emph{concatenation products} of these wheels by embedding them left to right in the infinite strip.
Note, two wheels commute (up to sign) if and only if the total number of disks in the two wheels is at most $w$.
Moreover, the concatenation product of any number of wheels is a torus in $\text{conf}(n,w)$.

There is another operation on wheels in addition to taking concatenation products, that yields homology classes called \emph{(averaged)-filters}. 
These classes are what truly differentiate $\text{conf}(n,w)$ from $F_{n}(\R^{2})$ as when $w>2$, they preclude the former from being a $K(\pi,1)$, though we only recall their existence to state the following basis theorem.
First, however, we note that Alpert and Manin proved that $H_{*}\big(\text{conf}(n,w);\Z\big)$ is free Abelian \cite[Theorem B]{alpert2021configuration1}, and that concatenations of wheels representation integral homology classes.

\begin{prop}\label{AMWthmB''}
(W. \cite[Theorem 3.35]{wawrykow2023representation}) The homology group $H_{k}\big(\text{conf}(n,w);\Q\big)$ has a basis consisting of concatenations of wheels and non-trivial  averaged-filters on $m\ge 3$ wheels such that in each wheel, inside or outside of an averaged-filter, the largest label comes first.
We say one wheel ranks above another if it has more disks or if they have the same number of disks and its largest label is greater. 
A cycle is in the basis if and only if:
\begin{enumerate}
\item The wheels inside each  averaged-filter are in order of increasing rank. 
\item If $W_{1}$ and $W_{2}$ are adjacent wheels consisting of $n_{1}$ and $n_{2}$ disks, respectively, where $W_{1}$ is to the left of $W_{2}$, then $W_{1}$ outranks $W_{2}$ or $n_{1}+n_{2} > w$.
\item Every wheel to the immediate left of a  averaged-filter ranks above the least wheel in the averaged-filter.
\end{enumerate}
\end{prop}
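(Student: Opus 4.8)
The plan is to derive the basis from the generating set already produced by Alpert and Manin together with a straightening algorithm that puts every generating class into the normal form cut out by the wheel-ordering convention and conditions (1)--(3). By Alpert--Manin \cite{alpert2021configuration1} the concatenations of wheels and averaged-filters span $H_{*}(\text{conf}(n,w);\Q)$, and $H_{*}(\text{conf}(n,w);\Z)$ is free Abelian; it therefore suffices to cut this spanning set down to a linearly independent family of the correct size, and the candidate family is exactly the normal-form cycles. The first step is to catalogue the relations among the spanning classes: (i) the internal symmetries of a single wheel, which permit placing its largest label first; (ii) the commutation, up to sign, of two adjacent wheels whose disks number at most $w$; (iii) the symmetry of an averaged-filter in the wheels it averages over; and (iv) the interaction relations between a wheel and an adjacent averaged-filter.

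Next I would run these relations as a rewriting system whose normal forms are exactly the cycles described in the statement. Relation (i) normalizes each wheel individually; relation (ii) lets me push the higher-ranked of two commuting adjacent wheels to the left, which is precisely condition (2); relation (iii) lets me order the wheels inside each averaged-filter by increasing rank, giving condition (1); and relation (iv) normalizes the position of a wheel sitting immediately to the left of a filter, yielding condition (3). The content of this step is to verify that the rewriting is terminating and confluent, so that every generator reduces to a well-defined $\Q$-linear combination of normal-form cycles: termination follows from a monovariant built out of the rank statistic (size, then largest label) and the left-to-right positions of the blocks, while confluence reduces to checking the finitely many overlaps between rules (i)--(iv). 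This shows the normal-form cycles span $H_{*}(\text{conf}(n,w);\Q)$.

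The main obstacle is the linear independence of the normal-form cycles, as spanning by itself does not preclude relations among them. I would attack this with a dimension count: enumerate the normal-form cycles in each homological degree $k$ as a combinatorial generating function in the ranked-wheel and filter data, and compare it with the Poincar\'{e} series of $H_{*}(\text{conf}(n,w);\Q)$ computed by Alpert--Manin. Since the spanning classes already surject onto homology in every degree, an equality of these two counts forces independence, completing the proof. If the enumeration proves unwieldy, the fallback is an explicit dual pairing: each normal-form cycle is an embedded torus, so one can pair it against an intersection-theoretic cohomology dual and argue that, under a total order on the cycles refining rank and homological degree, the resulting Gram matrix is triangular with nonzero diagonal entries. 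Either route reduces the theorem to a finite combinatorial verification, and I expect the generating-function match to be the step that demands the most care.
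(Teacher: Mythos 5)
First, note that the paper does not actually prove this proposition: it is imported verbatim from the author's earlier work (Theorem 3.35 of \cite{wawrykow2023representation}), which in turn rests on Alpert--Manin's basis theorem for $H_{*}\big(\text{conf}(n,w)\big)$. That external proof proceeds by discrete Morse theory on the complex $\text{cell}(n,w)$: one builds a discrete gradient vector field whose critical cells are in bijection with the claimed normal forms, so the critical-cell count bounds the Betti numbers from above while the explicit wheel-and-filter cycles realize that bound. Your proposal replaces this with a rewriting system plus a dimension count, which is a genuinely different route, but it has two concrete gaps.

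The first gap is in the spanning step. The relations among wheels and averaged-filters are not all monomial reorderings: the filter identities express a single class as a signed $\Q$-linear combination of several products of wheels and smaller filters, so your rules (iii) and (iv) rewrite monomials to sums, and termination via a rank monovariant plus confluence via overlap-checking become genuinely delicate. More importantly, you give no argument that your list (i)--(iv) is a \emph{complete} set of relations, and without completeness the normal forms need not span. The second, more serious gap is circularity in the independence step: the Poincar\'{e} series of $H_{*}\big(\text{conf}(n,w);\Q\big)$ is not available independently of the basis theorem you are proving --- in the literature it is obtained precisely from the critical-cell count, which is the content of the theorem. To close this you would need an independent upper bound on the Betti numbers (e.g., the number of critical cells of a discrete vector field on $\text{cell}(n,w)$) that matches your enumeration of normal forms; only then does spanning force independence. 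Your fallback via a triangular intersection pairing is closer to workable, but it demands explicit dual cocycles, which is exactly the cohomological information this paper describes as elusive for $\text{conf}(n,w)$. Finally, you never engage with why the filters must be \emph{averaged} and why the statement is asserted only over $\Q$; the averaging is over a symmetric-group action on the wheels inside a filter and is where rational coefficients enter, so any complete proof must track that step.
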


For more on the homology of $\text{conf}(n,w)$ see \cite[Sections 3 and 4]{alpert2021configuration1} and \cite[Section 3]{wawrykow2023representation}.

One can check that the inclusion of a wheel into $\text{conf}(n,w)$ induces an inclusion on the homology 
\[
H_{*}(T^{n-1})\hookrightarrow H_{*}\big(\text{conf}(n,w)\big),
\]
making the wheels homologically decomposable.

\begin{prop}\label{image of wheel in H1}
Let $n\le w$, and let $W(i_{1}, \dots, i_{n})$ be a wheel in $\text{conf}(n,w)$.
The image of $W(i_{1}, \dots, i_{n})$ in $H_{1}\big(\text{conf}(n,w)\big)$ is
\[
W(i_{1}, i_{2})\oplus\big(W(i_{1}, i_{3})+W(i_{2}, i_{3})\big)\oplus\cdots\oplus\big(W(i_{1}, i_{n})+\cdots+W(i_{n-1}, i_{n})\big).
\]
Here we have suppressed the wheels on $1$ disk, which in light of Theorem \ref{AMWthmB''} appear in order of decreasing label to the right of each wheel on two disks.
\end{prop}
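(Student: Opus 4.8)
The plan is to pass to the homotopy-equivalent model $F_{n}(\R^{2})$---recall that $n\le w$ forces $\text{conf}(n,w)\simeq F_{n}(\R^{2})$---and to detect first homology classes by winding numbers. Concretely, $H_{1}\big(F_{n}(\R^{2});\Z\big)\cong\Z^{\binom{n}{2}}$, with a basis given by the two-disk wheels $W(a,b)$; the coefficient of a loop along the basis vector $W(a,b)$ is the winding number of the difference vector $z_{a}-z_{b}$ as the loop is traversed once. Since the inclusion of the torus is already known to be injective on $H_{1}$, all that remains is to compute, for each of the $n-1$ coordinate circles of the torus $W(i_{1},\dots,i_{n})$, the resulting list of pairwise winding numbers and read it back through this basis.

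First I would make the inductive geometric definition of the wheel precise enough to name its coordinate circles: the $k^{\text{th}}$ circle (for $1\le k\le n-1$) is the loop $\gamma_{k}$ in which the disk $i_{k+1}$ orbits once around the tight cluster $\{i_{1},\dots,i_{k}\}$ while every other disk is held fixed, with $i_{k+2},\dots,i_{n}$ parked far to the right in the strip. For $k=1$ this is exactly $W(i_{1},i_{2})$, settling the first summand. For $k\ge2$ I would compute the winding numbers along $\gamma_{k}$: the cluster $\{i_{1},\dots,i_{k}\}$ is rigid, so every pair inside it contributes $0$; the parked disks $i_{k+2},\dots,i_{n}$ neither move nor interact with the bounded orbit of $i_{k+1}$, so every pair involving them also contributes $0$; and since $i_{k+1}$ traces a simple positively-oriented loop enclosing the entire cluster exactly once, the pair $(i_{j},i_{k+1})$ has winding number $+1$ for each $1\le j\le k$. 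Reading these back through the basis gives $[\gamma_{k}]=W(i_{1},i_{k+1})+\cdots+W(i_{k},i_{k+1})$, which is precisely the $k^{\text{th}}$ summand. (Equivalently, $\gamma_{k}$ is the pure-braid element $A_{i_{1},i_{k+1}}\cdots A_{i_{k},i_{k+1}}$, whose abelianization is this sum.)

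The remaining bookkeeping is the parenthetical remark about suppressed one-disk wheels. To express each two-disk wheel $W(i_{j},i_{k+1})$ as an element of the Alpert--Manin basis of Proposition~\ref{AMWthmB''}, one records the remaining disks as one-disk wheels concatenated in the order dictated by that basis, namely in decreasing order of label to the right of the nontrivial wheel. Since a one-disk wheel carries no motion, it is trivial in $H_{1}$, so these concatenation factors do not alter the homology class; this justifies writing the image as a plain sum of two-disk wheels.

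The step I expect to require the most care is the winding-number computation for $\gamma_{k}$, specifically verifying that $i_{k+1}$ really does wind exactly once about each disk of the cluster and not at all about the parked spectators. This reduces to the elementary fact that a simple positively-oriented loop has winding number $+1$ about each interior point and $0$ about each exterior point; the only thing to check is that the wheel construction can be arranged so that the whole cluster lies inside, and the spectators outside, the orbit of $i_{k+1}$, which is immediate after shrinking the cluster and parking the spectators sufficiently far to the right in the strip. A sign convention for the orientation of each $W(a,b)$ must be fixed once and used consistently, but it introduces no genuine difficulty.
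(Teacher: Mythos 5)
Your argument is correct and matches the paper's: the paper's proof simply asserts that the class of disk $i_{k}$ orbiting $i_{1},\dots,i_{k-1}$ equals $W(i_{1},i_{k})+\cdots+W(i_{k-1},i_{k})$, which is exactly the fact you verify via the winding-number computation on each coordinate circle of the torus. Your write-up just supplies the justification the paper leaves implicit.
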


\begin{proof}
This follows immediately from the description of the wheels and the fact that the class in first homology corresponding to disk $i_{k}$ orbiting disks $i_{1}, \dots, i_{k-1}$ can be expressed as the sum
\[
W(i_{1}, i_{k})+\cdots+W(i_{k-1}, i_{k}).
\]
\end{proof}

Concatenation products of wheels behave similarly, and this will allow us to find large disjoint tori in $\text{conf}(n,w)$.

\begin{prop}\label{decomposing concatenation products of wheels}
Let $W(i_{1,1},\dots, i_{1, n_{1}})\cdots W(i_{m, 1}, \dots, i_{m, n_{m}})$ be a concatenation product of wheels.
Such an embedded torus is homologically decomposable, and if $w=2$, the image of this torus in $H_{1}\big(\text{conf}(n,w)\big)$ is 
\begin{multline*}
W(i_{1, 1}, i_{1, n_{1}})W(i_{2,1})W(i_{2, n_{2}})\cdots W(i_{m, n_{m}})\oplus\cdots\\\oplus
W(i_{1, 1})W(i_{1, n_{1}})\cdots W(i_{m-1,1})W(i_{m-1, n_{m-1}})\cdots W(i_{m,1}, i_{m, n_{m}}),
\end{multline*}
whereas if $w>2$, the image of this torus in $H_{1}\big(\text{conf}(n,w)\big)$ is 
\begin{multline*}
W(i_{1,1}, i_{1,2})\oplus\big(W(i_{1,1}, i_{1,3})+W(i_{1,2}, i_{1,3})\big)\oplus\cdots\oplus\big(W(i_{1,1}, i_{1,n_{1}})+\cdots+W(i_{1,n_{1}-1}, i_{1,n_{1}})\big)\oplus\cdots\\
\oplus W(i_{m,1}, i_{m,2})\oplus\big(W(i_{m,1}, i_{m,3})+W(i_{m,2}, i_{m,3})\big)\oplus\cdots\oplus\big(W(i_{m,1}, i_{m,n_{m}})+\cdots+W(i_{m,n_{m}-1}, i_{m,n_{m}})\big),
\end{multline*}
where, in the second decomposition, we have suppressed the wheels on one disk.
\end{prop}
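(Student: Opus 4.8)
The plan is to exploit the product structure of the concatenation torus together with the single-wheel computation of Proposition \ref{image of wheel in H1}. I would write the embedded torus as a map $\iota\colon T^{n_1-1}\times\cdots\times T^{n_m-1}\to\text{conf}(n,w)$, where the $k$-th factor is the $(n_k-1)$-torus traced out by wheel $k$. Since $H_1$ of a product of tori splits as the direct sum of the $H_1$'s of its factors, it suffices to compute the image of each factor separately: restricting $\iota$ to $T^{n_k-1}\times\{\text{pt}\}$ realizes wheel $k$ in motion while the remaining disks sit frozen at fixed basepoints in the strip. The first step is to observe that this restricted map is, up to the placement of the frozen disks, the standard inclusion of a single wheel, so that Proposition \ref{image of wheel in H1} computes its image once I account for where the frozen disks go.

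The dichotomy between $w>2$ and $w=2$ is governed entirely by the commuting rule recalled earlier: two wheels commute up to sign precisely when their total disk count is at most $w$. When $w>2$, a two-disk subwheel and any one-disk wheel have total size $3\le w$ and therefore commute, so the frozen disks contribute only one-disk wheels that may be slid freely past the orbiting pairs and suppressed from the notation. In this regime I would assemble the per-factor answers from Proposition \ref{image of wheel in H1} into a direct sum, producing the displayed $w>2$ formula. When $w=2$, every wheel has at most two disks, so only the two-disk wheels contribute to $H_1$; here a two-disk wheel and a one-disk wheel have total size $3>2$ and do not commute, so the frozen disks genuinely enter the homology class and must be recorded as one-disk wheels in their left-to-right order. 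I would then check, using the ordering conventions of Proposition \ref{AMWthmB''}, that freezing the non-orbiting disks in a line yields exactly the stated basis element in which wheel $k$ is the orbiting pair and all other disks appear as one-disk wheels, and sum over the two-disk wheels to obtain the displayed $w=2$ formula.

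Finally, homological decomposability amounts to the injectivity of $\iota_*$ on $H_1$, which I would deduce by exhibiting the images of the standard torus generators as linearly independent elements of the Alpert--Manin basis. In the $w=2$ case this is immediate, since distinct two-disk wheels frozen against the same arrangement of singletons give distinct basis elements. In the $w>2$ case the generators map to orbit-sums of the form $\sum_{l<j}W(i_{k,l},i_{k,j})$; because all disk labels are distinct, these sums have distinct top-ranked two-disk wheels and are therefore linearly independent, exactly as in the single-wheel argument behind Proposition \ref{image of wheel in H1}.

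I expect the main obstacle to be the $w=2$ analysis: one must argue carefully that sliding the non-orbiting disks into a fixed linear arrangement does not alter the homology class, and that the resulting cycle matches the Alpert--Manin basis element predicted by Proposition \ref{AMWthmB''}, respecting its rank- and label-ordering constraints. The $w>2$ case, by contrast, should reduce cleanly to Proposition \ref{image of wheel in H1} once the commuting rule licenses suppressing the one-disk wheels.
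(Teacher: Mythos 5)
Your argument is correct and takes essentially the same route as the paper, whose entire proof is the one-line remark that the claim ``follows immediately from Proposition \ref{image of wheel in H1} and the definition of the concatenation product.'' Your proposal simply fills in the details of that deduction (the factorwise splitting of $H_{1}$ of the product torus, the commuting rule governing the $w=2$ versus $w>2$ dichotomy, and the linear independence of the resulting Alpert--Manin basis elements), all of which check out.
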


\begin{proof}
This follows immediately from Proposition \ref{image of wheel in H1} and the definition of the concatenation product.
\end{proof}

Now we demonstrate the existence of two homologically decomposable disjoint $\Big(n-\big\lceil\frac{n}{w}\big\rceil\Big)$-tori in $\text{conf}(n,w)$. 
This, along with Proposition \ref{disjoint tori} and Corollary \ref{tori yield dtc lower bound} will prove that $(\textbf{d})\textbf{TC}_{\textbf{r}}\big(\text{conf}(n,w)\big)$ is at least $r\Big(n-\big\lceil\frac{n}{w}\big\rceil\Big)$.
We consider the cases $w=2$ and $w>2$ separately due to the fact that wheels on 2 disks in $\text{conf}(n,2)$ split the strip into disjoint regions.

\begin{lem}\label{disjoint tori in conf(n,2)}
There are two homologically decomposable disjoint $\big(n-\big\lceil\frac{n}{2}\big\rceil\big)$-tori in $\text{conf}(n,2)$.
\end{lem}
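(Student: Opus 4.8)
The plan is to produce the two tori as concatenation products of two-disk wheels, exploiting that the target dimension is $n-\lceil n/2\rceil=\lfloor n/2\rfloor$. First I would dispose of the bookkeeping: a concatenation product of $\lfloor n/2\rfloor$ two-disk wheels (together with a single leftover one-disk wheel when $n$ is odd) is an embedded $\lfloor n/2\rfloor$-torus in $\text{conf}(n,2)$, and by Proposition \ref{decomposing concatenation products of wheels} any such torus is automatically homologically decomposable. So the whole problem reduces to choosing two such tori whose images in $H_{1}$ meet only in $0$.

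The construction I would use is two ``edge-disjoint'' pairings of the labels $1,\dots,n$. For the first torus $T_{1}$ pair the disks as $\{1,2\},\{3,4\},\dots$ and concatenate the corresponding wheels left to right; for the second torus $T_{2}$ use the shifted pairing $\{2,3\},\{4,5\},\dots$, wrapping the last label back to $1$ when $n$ is even and leaving $1$ as a one-disk wheel when $n$ is odd. In either parity each torus has exactly $\lfloor n/2\rfloor=n-\lceil n/2\rceil$ two-disk wheels, so both have the required dimension, and (for $n\ge 3$) no unordered pair $\{a,b\}$ is used by both $T_{1}$ and $T_{2}$.

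To prove homological disjointness I would read off the two images from Proposition \ref{decomposing concatenation products of wheels}: for $w=2$ the image of a concatenation product in $H_{1}$ is the direct sum, over its constituent two-disk wheels, of the classes obtained by ``activating'' one wheel $W(a,b)$ while the remaining disks sit as one-disk wheels on either side of it. Here the geometric input singled out before the lemma is decisive: a two-disk wheel blocks the width-$2$ strip, so each such summand is insensitive to the left-to-right order of the frozen one-disk wheels on a given side, and therefore equals a single Alpert--Manin basis element of Proposition \ref{AMWthmB''}, indexed by the orbiting pair $\{a,b\}$ and the set of labels lying to its left. Consequently the $\lfloor n/2\rfloor$ summands coming from $T_{1}$ and the $\lfloor n/2\rfloor$ summands coming from $T_{2}$ are $2\lfloor n/2\rfloor$ basis elements carrying $2\lfloor n/2\rfloor$ distinct pairs; distinct basis elements are linearly independent, so the two images intersect trivially.

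The step I expect to be the main obstacle is the middle claim that each summand in the decomposition is a genuine basis element rather than an uncontrolled combination. This is exactly where the $w=2$ barrier phenomenon must be used carefully: one has to check that sliding the frozen one-disk wheels within the connected region on one side of an orbiting pair does not change the homology class, so that only the left/right partition of the remaining labels---and not their internal order---survives. Once that normalization is in place, matching the summands to the basis of Proposition \ref{AMWthmB''} and reading off the distinct pairs is routine, and Proposition \ref{disjoint tori} together with Corollary \ref{tori yield dtc lower bound} then converts the pair of tori into the desired lower bounds.
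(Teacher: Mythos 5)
Your proposal is correct and follows essentially the same route as the paper: both build the two tori as concatenation products of $\lfloor n/2\rfloor$ two-disk wheels, decompose their images in $H_{1}$ via Proposition \ref{decomposing concatenation products of wheels}, normalize each summand (using that one-disk wheels commute with each other but cannot cross a two-disk wheel in width $2$) into a basis element of Proposition \ref{AMWthmB''}, and conclude disjointness from the fact that the two tori are spanned by distinct basis elements. The only difference is the explicit choice of pairings---the paper pairs $\{n,n/2\},\{n-1,n/2-1\},\dots$ against $\{n,1\},\{n-1,n/2\},\dots$ rather than consecutive pairs against their cyclic shift---and your observation that distinctness of the orbiting pairs alone already separates the basis elements is a minor streamlining of the same argument.
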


\begin{proof}
Consider the following wheels in $\text{conf}(n,2)$:
\[
A=
\begin{cases}
W\big(n, \frac{n}{2}\big)W\big(n-1, \frac{n}{2}-1\big)\cdots W\big(\frac{n}{2}+1, 1\big)&n\mbox{ even}\\
W\big(n, \frac{n-1}{2}\big)W\big(n-1, \frac{n-3}{2}\big)\cdots W\big(\frac{n+3}{2}, 1\big)W\big(\frac{n+1}{2}\big)&n\mbox{ odd}
\end{cases}
\]
and 
\[
B=
\begin{cases}
W(n, 1)W\big(n-1, \frac{n}{2}\big)W\big(n-2, \frac{n}{2}-1\big)\cdots W\big(\frac{n}{2}+1, 2\big) &n\mbox{ even}\\
W\big(n-1, \frac{n-1}{2}\big)W\big(n-2, \frac{n-3}{2}\big)\cdots W\big(\frac{n+1}{2}, 1\big)W(n)&n\mbox{ odd},
\end{cases}
\]
see Figures \ref{disjointtoriconf32} and \ref{disjointtoricell32}.

\begin{figure}[h]
\centering
\captionsetup{width=.8\linewidth}
\includegraphics[width = 12cm]{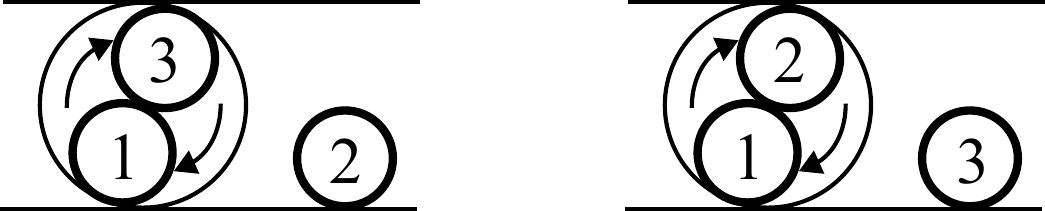}
\caption{The tori $A=W(3,1)W(2)$ and $B=W(2,1)W(3)$ in $\text{conf}(3,2)$.
This is a disjoint pair of homologically decomposable tori, see Figure \ref{disjointtoricell32}.}
\label{disjointtoriconf32}
\end{figure}

\begin{figure}
\centering
\captionsetup{width=.8\linewidth}
\includegraphics[width = 8cm]{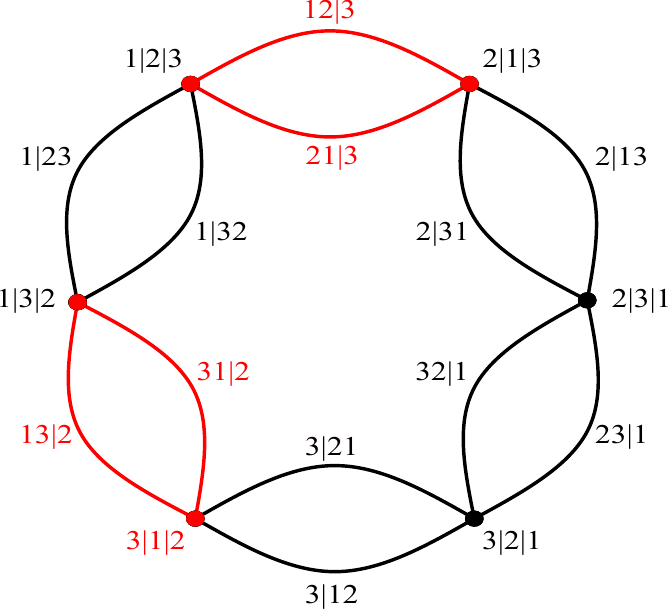}
\caption{The tori $A=W(3,1)W(2)$ and $B=W(2,1)W(3)$ in $\text{cell}(3,2)$.
In the $\text{cell}(3,2)$ model of $\text{conf}(3,2)$ it is obvious that these tori are homologically disjoint. 
Compare with Figure \ref{disjointtoriconf32}.}
\label{disjointtoricell32}
\end{figure}

These wheels are homologically decomposable, and when $n$ is even Proposition \ref{decomposing concatenation products of wheels} proves that their images in $H_{1}\big(\text{conf}(n,2)\big)$ are
\begin{multline*}
\text{Im}(A)=W\Big(n, \frac{n}{2}\Big)W(n-1)\cdots\widehat{W\Big(\frac{n}{2}\Big)}W(1)\oplus\\
W(n)W\Big(\frac{n}{2}\Big)W\Big(n-1, \frac{n}{2}-1\Big)W(n-2)\cdots\widehat{W\Big(\frac{n}{2}-1\Big)}\widehat{W\Big(\frac{n}{2}\Big)}\cdots W(1)\oplus\\
\cdots\oplus W(n)\cdots\widehat{W\Big(\frac{n}{2}+1\Big)}\cdots W(2)W\Big(\frac{n}{2}+1, 1\Big),
\end{multline*}
and
\begin{multline*}
\text{Im}(B)=W(n,1)W(n-1)\cdots W(2)\oplus\\
W(n)W(1)W\Big(n-1, \frac{n}{2}\Big)W(n-2)\cdots\widehat{W\Big(\frac{n}{2}\Big)}\cdots W(2)\oplus\\
\cdots\oplus W(n)\cdots\widehat{W\Big(\frac{n}{2}+1, 2\Big)}\cdots\widehat{W(2)}W(1)W\Big(\frac{n}{2}+1, 2\Big).
\end{multline*}

Similarly, when $n$ is odd Proposition \ref{decomposing concatenation products of wheels} proves their images in $H_{1}\big(\text{conf}(n,2)\big)$ are
\begin{multline*}
\text{Im}(A)=W\Big(n, \frac{n-1}{2}\Big)W(n-1)\cdots\widehat{W\Big(\frac{n-1}{2}\Big)}\cdots W(1)\oplus\\
W(n)W\Big(\frac{n-1}{2}\Big)W\Big(n-1, \frac{n-3}{2}\Big) W(n-2)\cdots \widehat{W\Big(\frac{n-1}{2}\Big)}\widehat{W\Big(\frac{n-3}{2}\Big)}\cdots W(1)\oplus\\
\cdots\oplus W(n)\cdots\widehat{W\Big(\frac{n+3}{2}\Big)}\widehat{W\Big(\frac{n+1}{2}\Big)}\cdots W(2)W\Big(\frac{n+3}{2}, 1\Big)W\Big(\frac{n+1}{2}\Big),
\end{multline*}
and
\begin{multline*}
\text{Im}(B)=W\Big(n-1, \frac{n-1}{2}\Big)W(n)\widehat{W(n-1)}W(n-2)\cdots\widehat{W\Big(\frac{n-1}{2}\Big)}\cdots W(1)\oplus\\
W(n-1)W\Big(\frac{n-1}{2}\Big)W\Big(n-2, \frac{n-3}{2}\Big)W(n)\widehat{W(n-1)}\widehat{W(n-2)}W(n-3)\cdots \widehat{W\Big(\frac{n-1}{2}\Big)}\widehat{W\Big(\frac{n-3}{2}\Big)}\cdots W(1)\oplus\\
\cdots\oplus W(n-1)\cdots\widehat{W\Big(\frac{n+1}{2}\Big)}\cdots W(2)W\Big(\frac{n+1}{2}, 1\Big)W(n).
\end{multline*}

In both cases, we used the fact that we may commute the wheels on $1$ disk past each other to rewrite our summands.
Moreover, in both cases, all of the summands of $\text{Im}(A)$ and $\text{Im}(B)$ in $H_{1}\big(\text{conf}(n,2)\big)$ are elements of the basis of Proposition \ref{AMWthmB''}. 
Moreover, the basis elements spanning $\text{Im}(A)$ are disjoint from the basis elements spanning $\text{Im}(B)$.
It follows that $A$ and $B$ are homologically disjoint in $\text{conf}(n,2)$.
\end{proof}

\begin{lem}\label{disjoint tori in conf(n,w) w>2}
There are two homologically decomposable disjoint $\big(n-\big\lceil\frac{n}{w}\big\rceil\big)$-tori in $\text{conf}(n,w)$.
\end{lem}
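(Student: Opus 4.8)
The plan is to realize the two tori as concatenation products of $m:=\lceil n/w\rceil$ wheels whose blocks partition $\{1,\dots,n\}$, and then to certify homological disjointness by a direct linear-independence computation in $H_{1}$, rather than by exhibiting literally disjoint supports as in Lemma \ref{disjoint tori in conf(n,2)}. Recall that restricting the basis of Proposition \ref{AMWthmB''} to degree one shows that the two-disk wheels $W(a,b)$ with $a>b$ form a $\Q$-basis of $H_{1}\big(\mathrm{conf}(n,w);\Q\big)$, so every class is recorded by its coordinates on these $\binom{n}{2}$ basis elements. A partition of the disks into $m$ blocks of sizes $\le w$, together with an ordering of each block, gives a concatenation of wheels that is an $\big(n-\lceil n/w\rceil\big)$-torus; by Propositions \ref{image of wheel in H1} and \ref{decomposing concatenation products of wheels} it is homologically decomposable, and its image in $H_{1}$ is spanned by one generator for each non-initial disk $x$ of each block, namely the sum $\sum_{y}W(x,y)$ over the predecessors $y$ of $x$ in its block. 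Thus the lemma reduces to choosing two such partitions $A$ and $B$ for which the $2\big(n-\lceil n/w\rceil\big)$ combined generators are linearly independent.

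First I would arrange the two partitions \emph{transversally}: no block of $A$ is contained in a single block of $B$, and vice versa (equivalently, every block of size $\ge 2$ meets at least two blocks of the other partition). Such a pair can be produced from an $m\times m$ grid whose rows are the blocks of $A$ and whose columns are the blocks of $B$, placing the $n$ disks along broken diagonals (distinct cyclic shifts, reused if necessary); this keeps every row- and column-sum at most $\lceil n/m\rceil\le w$, keeps every row and column nonempty, and guarantees that the disks of any line never all fall into a single transverse line. Given transversality, I would order each block so that its first two disks lie in different blocks of the other partition; then every non-initial disk has, among its predecessors, disks from two distinct blocks of the other partition, and in particular a predecessor $y_{0}$ lying in a block of the other partition different from its own.

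With these orderings in hand, independence follows from a \emph{private coordinate} argument. To the generator attached to a non-initial disk $x$ assign the basis element $W(x,y_{0})$, where $y_{0}$ is the chosen predecessor in a different block of the other partition; by Proposition \ref{image of wheel in H1} this $W(x,y_{0})$ occurs with coefficient $\pm1$ in that generator. It occurs in no other generator: within the same partition the pair $\{x,y_{0}\}$ can only contribute to the generator of whichever of $x,y_{0}$ comes later, which is $x$; and since $x$ and $y_{0}$ lie in different blocks of the other partition, $\{x,y_{0}\}$ is not an intra-block pair there and so contributes to none of that partition's generators. These private basis elements are pairwise distinct, so the matrix expressing the combined generators in the two-disk wheel basis contains a square submatrix that is diagonal with nonzero entries; the generators are therefore linearly independent. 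Consequently $\mathrm{Im}(A_{*})\cap\mathrm{Im}(B_{*})=0$, so $A$ and $B$ are homologically disjoint, homologically decomposable $\big(n-\lceil n/w\rceil\big)$-tori.

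The main obstacle is precisely the point at which this case departs from $w=2$. When $w=2$ one always has $\lceil n/w\rceil^{2}\ge n$, so the images of the two tori can be supported on literally disjoint basis elements; for $w>2$ it can happen that $n>\lceil n/w\rceil^{2}$, which forces some two-disk wheel to be an intra-block pair of \emph{both} partitions, making disjoint supports impossible. The work therefore goes into (i) building two partitions into exactly $m$ blocks of size $\le w$ that are transversal despite these unavoidable overlaps, and (ii) choosing compatible internal orderings so that each of the $2\big(n-\lceil n/w\rceil\big)$ generators retains a single private basis element surviving the overlaps; once both are arranged, the linear algebra above finishes the proof.
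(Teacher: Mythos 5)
Your proposal is correct and follows essentially the same route as the paper: both realize the two tori as concatenation products of $m=\lceil n/w\rceil$ wheels built from two partitions of the disks into blocks of size at most $w$, compute their images in $H_{1}$ via Propositions \ref{image of wheel in H1} and \ref{decomposing concatenation products of wheels}, and certify homological disjointness by a linear-independence computation in the two-disk-wheel basis. The paper simply fixes one explicit instance of your transversal-partition scheme---each block keeps a common set of ``followers'' while the $m$ largest labels are cyclically permuted among the blocks as leaders---and its projection onto the span of the wheels $W(\text{leader},\text{follower})$ is exactly your private-coordinate argument with $y_{0}$ taken to be the block leader.
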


\begin{proof}
We have already proven this when $w=2$ in Lemma \ref{disjoint tori in conf(n,2)}, so we focus on the case $w>2$.
Let $m=\big\lceil\frac{n}{w}\big\rceil$.
Consider the following products of wheels in $\text{conf}(n,w)$
\begin{multline*}
A=W\big(n, n-m, n-m-1,\dots, n-m-w+2\big)W\big(n-1, n-m-w+1, \dots, n-m-2(w-2)-1\big)\cdots\\
W\big(n-m+2, n-m-(m-2)(w-2)-2,\dots, n-m-(m-1)(w-2)-m+2\big)W\big(n-m+1, n-(m-1)w-1, \dots, 1\big)
\end{multline*}
and
\begin{multline*}
B=W\big(n-1, n-m, n-m-1,\dots, n-m-w+2\big)\cdots W\big(n-m+2, n-m-w+1, \dots, n-m-2(w-2)-1\big)\\
W\big(n-m+1, n-m-(m-2)(w-2)-2,\dots, n-m-(m-1)(w-2)-m+2\big)W\big(n, n-(m-1)w-1, \dots, 1\big);
\end{multline*}
see Figure \ref{disjointtoriconf73} for an example.

\begin{figure}[h]
\centering
\captionsetup{width=.8\linewidth}
\includegraphics[width = 8cm]{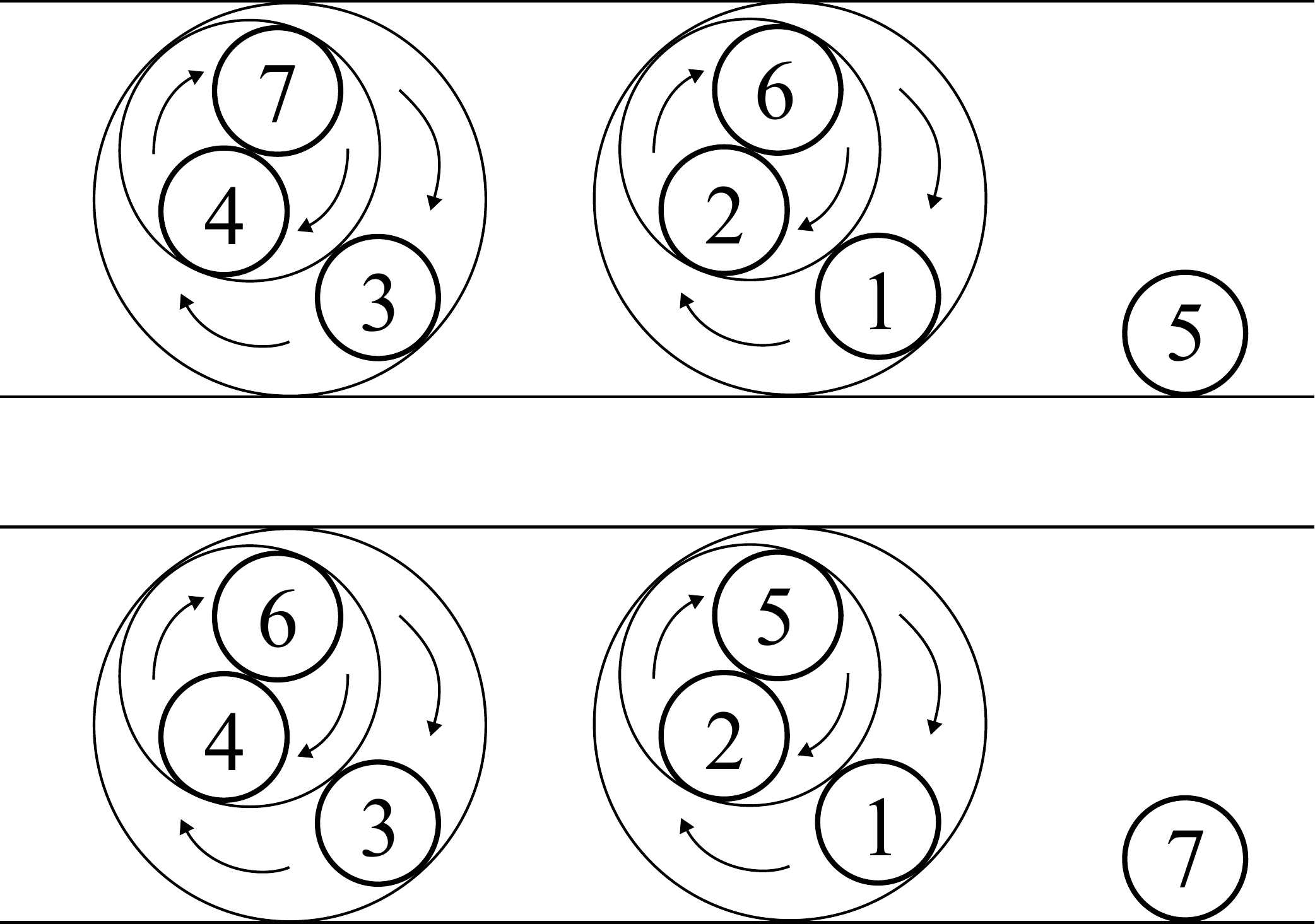}
\caption{The tori $A=W(7,4,3)W(6,2,1)W(5)$ and $B=W(6,4,3)W(5,2,1)W(4)$ in $\text{conf}(7,3)$.
This is a disjoint pair of homologically decomposable tori.}
\label{disjointtoriconf73}
\end{figure}

By Proposition \ref{decomposing concatenation products of wheels} these products of wheels are decomposable, and their images in $H_{1}\big(\text{conf}(n,w)\big)$, when written in terms of the basis of Proposition \ref{AMWthmB''}, are
\begin{multline*}
\text{Im}(A)=W(n, n-m)\oplus\big(W(n,n-m-1)+W(n-m,n-m-1)\big)\oplus\cdots\\
\oplus\big(W(n,n-m-w+2)+\cdots+W(n-m-w+3,n-m-w+2)\big)\bigoplus\\
W(n-1, n-m-w+1)\oplus\big(W(n-1, n-m-w)+W(n-m-w, n-m-w+1)\big)\oplus\cdots\\
\oplus\big(W(n-1,n-m-2(w-2)-1)+\cdots+W(n-m-2(w-2),n-m-2(w-2)-1)\big)\bigoplus\cdots\\
\bigoplus W(n-m+1, n-(m-1)w-1)\oplus\big(W(n-m+1, n-(m-1)w-2)+W(n-(m-1)w-1, n-(m-1)w-2)\big)\oplus\cdots\\
\oplus\big(W(n-m+1,1)+W(n-(m-1)w-2, 1)+\cdots+W(2,1)\big),
\end{multline*}
and
\begin{multline*}
\text{Im}(B)=W(n-1, n-m)\oplus\big(W(n-1,n-m-1)+W(n-m,n-m-1)\big)\oplus\cdots\\
\oplus\big(W(n-1,n-m-w+2)+\cdots+W(n-m-w+3,n-m-w+2)\big)\bigoplus\\
W(n-2, n-m-w+1)\oplus\big(W(n-2, n-m-w)+W(n-m-w, n-m-w+1)\big)\oplus\cdots\\
\oplus\big(W(n-2,n-m-2(w-2)-1)+\cdots+W(n-m-2(w-2),n-m-2(w-2)-1)\big)\bigoplus\cdots\\
\bigoplus W(n, n-(m-1)w-1)\oplus\big(W(n, n-(m-1)w-2)+W(n-(m-1)w-1, n-(m-1)w-2)\big)\oplus\cdots\\
\oplus\big(W(n,1)+W(n-(m-1)w-2, 1)+\cdots+W(2,1)\big).
\end{multline*}

There are $\big(n-\big\lceil\frac{n}{w}\big\rceil\big)$-dimensional subspaces of $H_{1}\big(\text{conf}(n,w)\big)$.
We project them onto the $\big(n-\big\lceil\frac{n}{w}\big\rceil\big)$-dimensional subspace
\begin{multline*}
W(n, n-m)\oplus W(n,n-m-1)\oplus\cdots \oplus W(n,n-m-w+2)\bigoplus\\
W(n-1, n-m-w+1)\oplus W(n-1, n-m-w)\oplus\cdots\oplus W(n-1,n-m-2(w-2)-1)\bigoplus\cdots\\
\bigoplus W(n-m+1, n-(m-1)w-1)\oplus W(n-m+1, n-(m-1)w-2)\oplus\cdots\oplus W(n-m+1, 1),
\end{multline*}
which is also an $\big(n-\big\lceil\frac{n}{w}\big\rceil\big)$-dimensional subspaces of $H_{1}\big(\text{conf}(n,w)\big)$.

The image of the projection of $A$ is all of this subspace, whereas the image of $B$ is trivial.
Since the dimension of $A$ and this subspace are the same, it follows that the images of $A$ and $B$ trivially intersect in $H_{1}\big(\text{conf}(n,w)\big)$, i.e., they are homologically disjoint.
\end{proof}

The previous two lemmas, Proposition \ref{disjoint tori}, and Corollary \ref{tori yield dtc lower bound} yield the following lower bound for the $r^{\text{th}}$-sequential (distributional) topological complexity of $\text{conf}(n,w)$ when $n>w$.

\begin{lem}\label{lower bound for top complex conf}
If $n>w$, then
\[
r\Big(n-\Big\lceil\frac{n}{w}\Big\rceil\Big)\le\textbf{dTC}_{\textbf{r}}\big(\text{conf}(n,w)\big)\le \textbf{TC}_{\textbf{r}}\big(\text{conf}(n,w)\big).
\]
\end{lem}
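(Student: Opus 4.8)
The plan is to assemble results already established in the excerpt. The two lower-bound engines---Proposition \ref{disjoint tori} for $\textbf{TC}_{\textbf{r}}$ and Corollary \ref{tori yield dtc lower bound} for $\textbf{dTC}_{\textbf{r}}$---both take as input a homologically decomposable $m$-torus together with a homologically decomposable $l$-torus that is homologically disjoint from it, and Lemmas \ref{disjoint tori in conf(n,2)} and \ref{disjoint tori in conf(n,w) w>2} supply exactly such a pair. So the whole argument reduces to plugging the correct dimensions into these bounds and then comparing $\textbf{dTC}_{\textbf{r}}$ with $\textbf{TC}_{\textbf{r}}$.

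First I would set $d := n - \lceil n/w\rceil$ and, assuming $n > w$, invoke Lemma \ref{disjoint tori in conf(n,w) w>2} (which subsumes the $w = 2$ case of Lemma \ref{disjoint tori in conf(n,2)}) to produce two homologically decomposable $d$-tori $A$ and $B$ in $\text{conf}(n,w)$ that are homologically disjoint. The essential numerical observation is that both tori have the \emph{same} dimension $d$, so in the notation of the abstract bounds one takes $m = l = d$.

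Next I would feed these tori into the two lower bounds. Setting $m = l = d$ in Proposition \ref{disjoint tori} gives $\textbf{TC}_{\textbf{r}}(\text{conf}(n,w)) \ge (r-1)d + d = rd$, and the analogous substitution in Corollary \ref{tori yield dtc lower bound} yields the same value $rd$ as a lower bound for $\textbf{dTC}_{\textbf{r}}(\text{conf}(n,w))$. Before applying the corollary I would verify its standing hypothesis that $\text{conf}(n,w)$ has the homotopy type of a finite simplicial complex; this holds because $\text{conf}(n,w) \simeq \text{cell}(n,w)$ is a finite cellular complex. Finally, chaining the bound $rd \le \textbf{dTC}_{\textbf{r}}(\text{conf}(n,w))$ with the comparison $\textbf{dTC}_{\textbf{r}} \le \textbf{TC}_{\textbf{r}}$ of Proposition \ref{dtcm is less than tcm} gives the full inequality $r\big(n - \lceil n/w\rceil\big) \le \textbf{dTC}_{\textbf{r}}(\text{conf}(n,w)) \le \textbf{TC}_{\textbf{r}}(\text{conf}(n,w))$.

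Since every genuinely hard step has already been carried out---the explicit construction of the products of wheels and the verification, via the Alpert--Manin basis of Proposition \ref{AMWthmB''}, that their $H_1$-images meet trivially, together with the zero-divisor cup-length computations underlying the two bounds---I expect no real obstacle in this lemma. The only point requiring attention is that the two bounds collapse to exactly $rd$ precisely because the tori are equidimensional ($m = l = d$); this equidimensionality is what makes the resulting lower bound coincide with the upper bound $r\big(n - \lceil n/w\rceil\big)$ of Proposition \ref{upper bound for seq top of conf}, thereby pinning down $(\textbf{d})\textbf{TC}_{\textbf{r}}\big(\text{conf}(n,w)\big)$ in the third case of the main theorems.
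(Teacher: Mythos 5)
Your proposal is correct and matches the paper's argument exactly: the paper likewise obtains the lemma by feeding the two equidimensional $\big(n-\lceil n/w\rceil\big)$-tori of Lemmas \ref{disjoint tori in conf(n,2)} and \ref{disjoint tori in conf(n,w) w>2} into Proposition \ref{disjoint tori} and Corollary \ref{tori yield dtc lower bound}, with Proposition \ref{dtcm is less than tcm} supplying the right-hand inequality. One small caution: Corollary \ref{tori yield dtc lower bound} as printed reads $r(m-1)+l$, which with $m=l=d$ would give $rd-r+d$ rather than $rd$; your computation implicitly uses the intended bound $(r-1)m+l$ (the count actually produced by the zero-divisor product in the proof of Proposition \ref{disjoint tori}, and the form the paper itself applies in the $n\le w$ case), so your conclusion stands, but the discrepancy in the printed statement is worth flagging.
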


Combining this lemma, Proposition \ref{upper bound for seq top of conf}, and Gonz\'{a}lez and Grant's Theorem 1.3 \cite[Theorem 1.3]{gonzalez2015sequential}, we have calculated the sequential topological complexity of $\text{conf}(n,w)$.

\begin{T1}
  \thmtext
\end{T1}

Additionally, we have proven the third case of 

\begin{T2}
  \thmtextone
\end{T2}

Since $\text{conf}(1,w)$ is contractible, it remains to handle the $n\le w$ case of Theorem \ref{seq dist top complexity of conf(n,w)}, which we now do.

\begin{proof}
For $n\le w$, consider the following two (products of) wheels in $\text{conf}(n,w)$
\[
A=W(n,\dots, 1)\indent\text{and}\indent B=W(n-1, \dots, 1)W(n).
\]
By Proposition \ref{decomposing concatenation products of wheels}, $A$ is a homologically decomposable $(n-1)$-torus and $B$ is a homologically decomposable $(n-2)$-torus.
Moreover, they are disjoint as the image of the summands of $A$ in $H_{1}\big(\text{conf}(n,w)\big)$ are of the form 
\[
W(n,j)+\cdots+W(n,n-1),
\]
for $1\le j\le n-1$, whereas none of the summands of $B$ in $H_{1}\big(\text{conf}(n,w)\big)$ have any term of the form $W(n,i)$.
Since the $W(m,l)$s form a basis for $H_{1}\big(\text{conf}(n,w)\big)$, it follows that $A$ and $B$ are homologically decomposable.
Therefore Corollary \ref{tori yield dtc lower bound} shows that for $1<n\le w$,
\[
\textbf{dTC}_{r}\big(\text{conf}(n,w)\big)\ge (r-1)(n-1)+n-2=r(n-1)-1.
\]
Since $\textbf{dTC}_{r}\big(\text{conf}(n,w)\big)\le \textbf{TC}_{r}\big(\text{conf}(n,w)\big)=r(n-1)-1$ for $1<n\le w$, we have the desired equality.
\end{proof}

Note that if $n\le w$, then $\text{conf}(n,w)\simeq F_{n}(\R^{2})$, so our proof of Theorem \ref{seq dist top complexity of conf(n,w)} also proves 

\begin{T3}
  \thmtexttwo
\end{T3}

\section{Open Questions}

We conclude this paper with several open questions about disk configuration spaces.

\begin{enumerate}

\item The quotient of $\text{conf}(n,w)$ by the obvious $S_{n}$ action yields \emph{$\text{uconf}(n,w)$}, the \emph{unordered configuration space of $n$ open unit-diameter disks in the infinite strip of width $w$}.
What is $(\textbf{d})\textbf{TC}_{\textbf{r}}\big(\text{uconf}(n,w)\big)$?
If $n$ is an odd number greater than $2$ and $w=2$, it is relatively easy to find two disjoint homologically decomposable tori of maximal dimension, proving the following proposition.

\begin{prop}\label{seq top comp of unordered conf(n,2) odd}
Let $2m+1$ be an odd number greater than $2$. 
Then
\[
(\textbf{d})\textbf{TC}_{\textbf{r}}\big(\text{uconf}(2m+1,2)\big)=rm.
\]
\end{prop}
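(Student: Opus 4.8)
The plan is to show both quantities equal $rm$ by matching an upper and a lower bound. For the upper bound, I would first observe that the $S_{2m+1}$-action on $\text{conf}(2m+1,2)$ is free, so the equivariant equivalence $\text{conf}(2m+1,2)\simeq\text{cell}(2m+1,2)$ descends to $\text{uconf}(2m+1,2)\simeq\text{cell}(2m+1,2)/S_{2m+1}$, a complex of dimension $\dim\text{cell}(2m+1,2)=(2m+1)-\lceil(2m+1)/2\rceil=m$. Hence $\text{hdim}\big(\text{uconf}(2m+1,2)\big)=m$, and since the space is path-connected, Proposition \ref{upper bound for seq top} gives $\textbf{TC}_{\textbf{r}}\le rm$; Proposition \ref{dtcm is less than tcm} then yields $\textbf{dTC}_{\textbf{r}}\le\textbf{TC}_{\textbf{r}}\le rm$.

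For the lower bound I would produce two homologically decomposable, homologically disjoint $m$-tori. Since $w=2$, an orbiting pair fills the width of the strip and blocks it, so a configuration of $m$ orbiting pairs together with the one leftover disk occupies $m+1$ separated regions. Let $A$ be the $m$-torus in which the $m$ pairs orbit independently with the leftover disk in the rightmost region, and let $B$ be the analogous $m$-torus with the leftover disk in the leftmost region; each is a genuine $T^m$ because blocking forces the orbits to commute. To compute their images in $H_1\big(\text{uconf}(2m+1,2);\Q\big)$, write $c_\ell$ (for $0\le\ell\le 2m-1$) for the class of the loop in which one pair orbits while the other $2m-1$ disks stay fixed, with exactly $\ell$ of them to its left; since a blocking pair cannot be crossed, $c_\ell$ depends only on $\ell$. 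The $i$-th factor of $A$ is then $c_{2(i-1)}$ and the $i$-th factor of $B$ is $c_{2i-1}$, so $\text{Im}(A_*)=\langle c_0,c_2,\dots,c_{2m-2}\rangle$ and $\text{Im}(B_*)=\langle c_1,c_3,\dots,c_{2m-1}\rangle$. (Here oddness of $n$ is essential: the leftover disk is exactly what lets $A$ and $B$ realize the even and the odd classes separately.) It therefore suffices to prove that $c_0,\dots,c_{2m-1}$ are linearly independent, for then both tori are decomposable and their images meet only in $0$.

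To establish independence I would pass to the ordered space. Freeness of the action makes the quotient map induce an isomorphism from the $S_{2m+1}$-coinvariants of $H_1\big(\text{conf}(2m+1,2);\Q\big)$ onto $H_1\big(\text{uconf}(2m+1,2);\Q\big)$, and identifies $c_\ell$ with the image of a degree-one class $\beta_\ell$ given by a single two-disk wheel carrying exactly $\ell$ one-disk wheels to its left; by Proposition \ref{AMWthmB''} these $\beta_\ell$ occur in the Alpert--Manin basis, and the count $\ell$ is preserved under relabeling. Because the action is free, every $S_{2m+1}$-invariant functional on $H_1(\text{conf})$ descends to $H_1(\text{uconf})$; letting $\psi_\ell$ equal $1$ on basis elements of the shape of $\beta_\ell$ with exactly $\ell$ one-disk wheels on the left and $0$ on every other basis element gives such a functional, whose descent $\overline\psi_\ell$ satisfies $\overline\psi_\ell(c_{\ell'})=\delta_{\ell\ell'}$ and forces the $c_\ell$ to be independent. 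The delicate point --- the main obstacle --- is checking that $S_{2m+1}$ permutes these basis elements without signs, so that $\psi_\ell$ is genuinely invariant; I would verify this on $\text{cell}(2m+1,2)$, where the relabeling that reverses a two-disk wheel simultaneously reverses the orientation of its orbit loop, so the two signs cancel. Granting this, $A$ and $B$ are homologically decomposable disjoint $m$-tori, and Proposition \ref{disjoint tori} together with Corollary \ref{tori yield dtc lower bound} give $(\textbf{d})\textbf{TC}_{\textbf{r}}\ge(r-1)m+m=rm$, matching the upper bound.
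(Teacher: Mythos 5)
Your construction is exactly the one the paper has in mind: the paper gives no written proof of this proposition, only the remark that two disjoint homologically decomposable tori of maximal dimension exist, and your tori $A$ and $B$ (the $m$ orbiting pairs with the leftover disk parked at the right, respectively left, end of the configuration), together with the upper bound $\text{hdim}\big(\text{uconf}(2m+1,2)\big)\le\dim\text{cell}(2m+1,2)=m$, are the right ingredients. The identification of $\mathrm{Im}(A_{*})$ and $\mathrm{Im}(B_{*})$ with the spans of the even- and odd-indexed classes $c_{\ell}$ is correct, and the problem is correctly reduced to the linear independence of $c_{0},\dots,c_{2m-1}$ in $H_{1}\big(\text{uconf}(2m+1,2);\Q\big)$. (You also correctly use the bound $(r-1)m+l$ rather than the $r(m-1)+l$ misprinted in Corollary \ref{tori yield dtc lower bound}; the former is what the proof of Proposition \ref{disjoint tori} actually produces.)

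There is, however, a gap in the independence step. Your functional $\psi_{\ell}$ is declared to vanish on every basis element that is not a type-$\ell$ wheel concatenation, but you only verify equivariance on the wheel concatenations themselves, where the sign issue you flag is indeed harmless. For $w=2$ the degree-one part of the basis of Proposition \ref{AMWthmB''} is \emph{not} exhausted by concatenations of one $2$-wheel with singletons: already for $n=3$ the complex $\text{cell}(3,2)$ has $6$ vertices and $12$ edges, so $b_{1}\big(\text{conf}(3,2)\big)=7$, while there are only $6$ such wheel classes; averaged-filter basis elements genuinely occur in degree one. Applying a permutation to such a filter element and rewriting the result in the basis can produce wheel-concatenation terms (this is what happens to the hexagonal cycle in $\text{cell}(3,2)$, which a relabeling carries to a cycle differing from $\pm$ itself by bigons), so the invariance of $\psi_{\ell}$ requires the span of the non-wheel basis elements to be an $S_{2m+1}$-submodule --- a point you do not address and which is the actual delicate step, not the orientation of a reversed $2$-wheel. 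The cleanest repair runs in the opposite direction: over $\Q$ the transfer embeds $H_{1}(\text{uconf})$ into $H_{1}(\text{conf})$ and sends $c_{\ell}$ to a positive multiple of $\sum_{\sigma}\sigma_{*}\beta_{\ell}$; since each $\sigma_{*}\beta_{\ell}$ is a single type-$\ell$ wheel basis element, these images are nonzero and supported on pairwise disjoint sets of basis vectors, hence independent --- and one never has to apply a permutation to a filter class. With that substitution your argument is complete.
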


For other $n$ and $w$, there is no obvious pair of disjoint homologically decomposable tori of maximal dimension.
Alpert and Manin give a basis for $H_{*}\big(\text{conf}(n,w);\F\big)$ when $\F=\Q$ or $\F=\F_{p}$ \cite[Theorems 9.3 and 9.7]{alpert2021configuration1}, though they do not describe its cohomology ring.
If $n\le w$, the unordered disk configuration space $\text{uconf}(n,w)$ is homotopy equivalent to $C_{n}(\R^{2})$, the \emph{unordered configuration space of $n$ points in $\R^{2}$}.
In this case, Bianchi and Recio-Mitter were able to find identical upper and lower bounds for the topological complexity of the unordered configuration space of points in the plane, proving that  $\textbf{TC}_{\textbf{2}}\big(C_{n}(\R^{2})\big)=2n-3$
\cite[Theorem 1.3 and 1.5]{bianchi2019topological}, \cite[Corollary 6.2]{bianchi2021homology}, and \cite[Theorem 1.5]{bianchi2020upper}.
While their methods rely on the fact that $C_{n}(\R^{2})$ is a $K(\pi,1)$ for the braid group, a fact that is not true for $\text{uconf}(n,w)$ for $n>w$, one would expect a similar result to hold given how $\text{uconf}(n,w)$ can be embedded in $C_{n}(\R^{2})$.

\item The ordered configuration space of $n$ open-unit diameter disks in the infinite strip of width $w$ is one of many hard shape configuration spaces.
Perhaps the most closely related is $F(n; p,q)$, the ordered configuration space of $n$ open unit-squares in the $p$ by $q$ rectangle.
What is $(\textbf{d})\textbf{TC}_{\textbf{r}}\big(F(n; p,q)\big)$?
Unlike $\text{conf}(n,w)$, the Betti numbers of $F(n; p,q)$, which have been studied in \cite{alpert2023asymptotic} and \cite{alpert2023homology}, are not always increasing $n$.
In fact, the $k^{\text{th}}$-homology of $F(n; p,q)$ is zero for $n$ sufficiently small \emph{or} large with respect to $pq$.
Moreover, it is believed that in the intermediate range these configuration spaces have homological dimension $\frac{pq}{4}$.
As such, does a sequential version of Farber's conjecture hold for $F(n; p,q)$, i.e., is $\frac{1}{r}\textbf{TC}_{\textbf{r}}\big(F(n; p,q)\big)$ independent of $n$ if $n$ is in this intermediate range?

\item
The (sequential) topological complexity of graph configuration spaces is relatively well studied, though there remain many open questions, e.g., \cite[Section 5]{knudsen2024farber}.
The most studied graph configuration spaces are those in which the underlying graph is a tree.
In this case, the (sequential) topological complexity of the ordered configuration space is known \cite[Theorem 1.2]{scheirer2018topological} and \cite[Theorem 5.8]{aguilar2022farley}, whereas in the unordered case sequential topological complexity is known if the number of points is sufficiently large \cite[Theorem 4.7]{hoekstra2023bounds}.
Many of these results use Abram's model for the discrete configuration space of a graph, see \cite[Section 2]{abrams2000configuration}, and the discrete vector field of Farley and Sabalka \cite{farley2008cohomology}.
What happens when we replace points with disks, i.e., insist that no pair of points can get within some finite distance of each other?
If the underlying graph is a tree, the Kim proved the configuration space is homotopy equivalent to a subcomplex of Abram's model for graph configuration space \cite[Theorem 3.2.10]{kim2022configuration}.
Can the restriction of Farley and Sabalka's discrete vector field be used to find the topological complexity of these configuration spaces?
\end{enumerate}

\bibliographystyle{amsalpha}
\bibliography{TopComplexConfnwBib}

\newcommand{\etalchar}[1]{$^{#1}$}
\providecommand{\bysame}{\leavevmode\hbox to3em{\hrulefill}\thinspace}
\providecommand{\MR}{\relax\ifhmode\unskip\space\fi MR }
\providecommand{\MRhref}[2]{%
  \href{http://www.ams.org/mathscinet-getitem?mr=#1}{#2}
}
\providecommand{\href}[2]{#2}
\begin{thebibliography}{AGGHM22}

\bibitem[ABK{\etalchar{+}}23]{alpert2023homology}
Hannah Alpert, Ulrich Bauer, Matthew Kahle, Robert MacPherson, and Kelly
  Spendlove, \emph{Homology of configuration spaces of hard squares in a
  rectangle}, Algebraic \& Geometric Topology \textbf{23} (2023), no.~6,
  2593--2626.

\bibitem[Abr00]{abrams2000configuration}
Aaron~David Abrams, \emph{Configuration spaces and braid groups of graphs},
  Ph.D. thesis, University of California, Berkeley, 2000.

\bibitem[AGGHM22]{aguilar2022farley}
Jorge Aguilar-Guzm{\'a}n, Jes{\'u}s Gonz{\'a}lez, and Teresa~I.
  Hoekstra-Mendoza, \emph{Farley--{S}abalka's {M}orse-theory model and the
  higher topological complexity of ordered configuration spaces on trees},
  Discrete \& Computational Geometry \textbf{67} (2022), no.~1, 258--286.

\bibitem[AKM21]{alpert2021configuration}
Hannah Alpert, Matthew Kahle, and Robert MacPherson, \emph{Configuration spaces
  of disks in an infinite strip}, Journal of Applied and Computational Topology
  (2021), 1--34.

\bibitem[AKM23]{alpert2023asymptotic}
\bysame, \emph{Asymptotic {B}etti numbers for hard squares in the homological
  liquid regime}, International Mathematics Research Notices (2023).

\bibitem[AM24]{alpert2021configuration1}
Hannah Alpert and Fedor Manin, \emph{Configuration spaces of disks in a strip,
  twisted algebras, persistence, and other stories}, Geometry \& Topology
  (2024).

\bibitem[BBK14]{BBK}
Yuliy Baryshnikov, Peter Bubenik, and Matthew Kahle, \emph{Min-type {M}orse
  theory for configuration spaces of hard spheres}, International Mathematics
  Research Notices \textbf{2014} (2014), no.~9, 2577--2592.

\bibitem[BGRT14]{basabe2014higher}
Ibai Basabe, Jes{\'u}s Gonz{\'a}lez, Yuli~B. Rudyak, and Dai Tamaki,
  \emph{Higher topological complexity and its symmetrization}, Algebraic \&
  Geometric Topology \textbf{14} (2014), no.~4, 2103--2124.

\bibitem[Bia21]{bianchi2021homology}
Andrea Bianchi, \emph{On the homology of the commutator subgroup of the pure
  braid group}, Proceedings of the American Mathematical Society \textbf{149}
  (2021), no.~6, 2387--2401.

\bibitem[Bia22]{bianchi2020upper}
\bysame, \emph{An upper bound on the topological complexity of discriminantal
  varieties}, Homology, Homotopy and Applications \textbf{24} (2022), no.~1,
  161--176.

\bibitem[BRM19]{bianchi2019topological}
Andrea Bianchi and David Recio-Mitter, \emph{Topological complexity of
  unordered configuration spaces of surfaces}, Algebraic \& Geometric Topology
  \textbf{19} (2019), no.~3, 1359--1384.

\bibitem[BZ14]{blagojevic2014convex}
Pavle~V.M. Blagojevi{\'c} and G{\"u}nter~M. Ziegler, \emph{Convex
  equipartitions via equivariant obstruction theory}, Israel Journal of
  Mathematics \textbf{200} (2014), no.~1, 49--77.

\bibitem[DJ24]{dranishnikov2024distributional}
Alexander Dranishnikov and Ekansh Jauhari, \emph{Distributional topological
  complexity and {L}{S}-category}, arXiv preprint arXiv:2401.04272 (2024).

\bibitem[Far03]{farber2003topological}
Michael Farber, \emph{Topological complexity of motion planning}, Discrete \&
  Computational Geometry \textbf{29} (2003), 211--221.

\bibitem[Far05]{farber2005collision}
Michael Farber, \emph{Algorithmic foundations of robotics {VI}}, ch.~Collision
  free motion planning on graphs, Springer, 2005.

\bibitem[FG09]{farber2009topological}
Michael Farber and Mark Grant, \emph{Topological complexity of configuration
  spaces}, Proceedings of the American Mathematical Society \textbf{137}
  (2009), no.~5, 1841--1847.

\bibitem[FS08]{farley2008cohomology}
Daniel Farley and Lucas Sabalka, \emph{On the cohomology rings of tree braid
  groups}, Journal of Pure and Applied Algebra \textbf{212} (2008), no.~1,
  53--71.

\bibitem[FY02]{farber2002topological}
Michael Farber and Sergey Yuzvinsky, \emph{Topological robotics: subspace
  arrangements and collision free motion planning}, arXiv preprint math/0210115
  (2002).

\bibitem[GG15]{gonzalez2015sequential}
Jes{\'u}s Gonz{\'a}lez and Mark Grant, \emph{Sequential motion planning of
  non-colliding particles in {E}uclidean spaces}, Proceedings of the American
  Mathematical Society \textbf{143} (2015), no.~10, 4503--4512.

\bibitem[HM23]{hoekstra2023bounds}
Teresa~I. Hoekstra-Mendoza, \emph{Bounds for the higher topological complexity
  of configuration spaces of trees}, Topology and its Applications \textbf{335}
  (2023).

\bibitem[Jau25]{jauhari2025sequential}
Ekansh Jauhari, \emph{On sequential versions of distributional topological
  complexity}, Topology and its Applications (2025), 109271.

\bibitem[Kim22]{kim2022configuration}
Jimin Kim, \emph{Configuration spaces of repulsive particles on a metric
  graph}, Ph.D. thesis, The Ohio State University, 2022.

\bibitem[Knu22]{knudsen2022topological}
Ben Knudsen, \emph{The topological complexity of pure graph braid groups is
  stably maximal}, Forum of Mathematics, Sigma \textbf{10} (2022).

\bibitem[Knu24]{knudsen2024farber}
\bysame, \emph{Farber's conjecture and beyond}, arXiv preprint arXiv:2402.03022
  (2024).

\bibitem[KW24]{knudsen2024analog}
Ben Knudsen and Shmuel Weinberger, \emph{Analog category and complexity}, SIAM
  {J}ournal on {A}pplied {A}lgebra and {G}eometry \textbf{8} (2024), no.~3,
  713--732.

\bibitem[Rud10]{rudyak2010higher}
Yuli~B. Rudyak, \emph{On higher analogs of topological complexity}, Topology
  and its Applications \textbf{157} (2010), no.~5, 916--920.

\bibitem[Sch18]{scheirer2018topological}
Steven Scheirer, \emph{Topological complexity of n points on a tree}, Algebraic
  \& Geometric Topology \textbf{18} (2018), no.~2, 839--876.

\bibitem[Waw22]{wawrykow2022On}
Nicholas Wawrykow, \emph{On the symmetric group action on rigid disks in a
  strip}, Journal of Applied and Computational Topology \textbf{6} (2022).

\bibitem[Waw24a]{wawrykow2023representation}
Nicholas Wawrykow, \emph{Representation stability for disks in a strip},
  Journal of Topology and Analysis (2024).

\bibitem[Waw24b]{wawrykow2024topological}
Nicholas Wawrykow, \emph{The topological complexity of the ordered
  configuration space of disks in a strip}, Proceedings of the American
  Mathematical Society Series B \textbf{11} (2024), 638--652.

\end{thebibliography}
\end{document}